\newtheorem{thm}{Theorem}[section]
\newtheorem{lem}[thm]{Lemma}
\newtheorem{prop}[thm]{Proposition}
\theoremstyle{definition}
\newtheorem{defn}[thm]{Definition}
\newtheorem{rem}[thm]{Remark}
\numberwithin{equation}{section}
\newcommand{\sbvek}[2]{\left[\begin{smallmatrix}#1\\#2\end{smallmatrix}\right]}
\newcommand{\spvek}[2]{\left(\begin{smallmatrix}#1\\#2\end{smallmatrix}\right)}
\newcommand{\sbmat}[4]{\left[\begin{smallmatrix}#1 & #2\\#3 & #4\end{smallmatrix}\right]}
\newcommand{\divg}{\operatorname{div}}
\newcommand{\dom}{\operatorname{dom}}
\DeclareMathOperator{\supp}{supp}
\newcommand{\frakA}{\mathfrak A}
\newcommand{\frakC}{\mathfrak C}
\newcommand{\frakS}{\mathfrak S}
\newcommand{\Id}{I}
\newcommand{\setdef}[2]{\left\{ #1 \left\vert\vphantom{#1} #2 \right.\right\}}
\newcommand{\N}{\mathbb{N}}
\newcommand{\Z}{\mathbb{Z}}
\newcommand{\R}{\mathbb{R}}
\newcommand{\<}{\langle}
\renewcommand{\>}{\rangle}
\newcommand{\loc}{\mathrm{loc}}
\title[modulating function method for infinite-dimensional systems]{The modulating function method for state estimation and feedback
of infinite-dimensional systems}
\author[F.\ Friedrich]{Folke Friedrich}
\address{{\bf F.~Friedrich:} Control Engineering group, Technische Universit\"at Ilmenau, Ilmenau, Germany}
\email{folke.friedrich@tu-ilmenau.de}
\author[J.\ Reger]{Johann Reger}
\address{{\bf J.~Reger:} Control Engineering group, Technische Universit\"at Ilmenau, Ilmenau, Germany}
\email{johann.reger@tu-ilmenau.de}
\author[T.\ Reis]{Timo Reis}
\address{{\bf T.~Reis:} Systems Theory and Partial Differential Equations group, Technische Universit\"at Ilmenau, Ilmenau, Germany}
\email{timo.reis@tu-ilmenau.de}
\begin{document}
\begin{abstract} We investigate state estimation and feedback for infinite-dimensional linear systems, including systems governed by partial differential equations with boundary control and observation. We extend the modulating function approach to infinite-dimensional systems. The basic idea is to reconstruct selected components of the state by convolving the input and output with null controls and corresponding outputs of the adjoint system. We further develop a distributional solution framework for the adjoint system that allows us to treat unbounded feedback operators. In particular, this framework enables the realisation of feedback involving spatial point evaluations for partial differential equations. \end{abstract}

\maketitle

\smallskip
\noindent \textbf{Keywords.} modulating function, infinite-dimensional systems, partial differential equations, boundary control, state feedback, state reconstruction

\smallskip
\noindent \textbf{Mathematics Subject Classification (2020).} 93C25, 93B53, 93B07, 93B28

\bigskip

\section{Introduction}\label{sec:intro}

The modulating function approach reconstructs selected components of the state by convolving the input and output with suitably chosen precomputed functions. More precisely, for a~linear system
\begin{equation}
    \begin{aligned}
    \dot{x}(t)&=Ax(t)+Bu(t),\\
    y(t)&=Cx(t)+Du(t),
\end{aligned}\label{eq:ABCDfin}
\end{equation}
$A\in\R^{n\times n}$, $B\in\R^{n\times m}$,
$C\in\R^{p\times n}$, $D\in\R^{p\times m}$, the approach consists of constructing, for given $T>0$ and $\varphi_0\in\R^n$, functions $\mu:[0,T]\to\R^m$ and $\eta:[0,T]\to\R^p$ such that every trajectory of \eqref{eq:ABCDfin} satisfies
\begin{equation}
    \forall\,t\geq T: \quad \varphi_0^\top x(t)=\int_{0}^T u(t-\tau)^\top \mu(\tau)-y(t-\tau)^\top \eta(\tau){\rm d}\tau.
\label{eq:movhor_fin}\end{equation}
Thus, $\varphi_0^\top x(t)$, which represents a component of $x(t)$ in the direction determined by $\varphi_0$, can be reconstructed from the input and output on the moving horizon $[t-T,t]$ by evaluating the integral in \eqref{eq:movhor_fin}. A straightforward integration by parts shows that suitable functions $\mu$ and $\eta$ are obtained by solving the following null-control problem for the adjoint system
\begin{equation}
    \begin{aligned}
    \dot{\varphi}(t)&=A^\top \varphi(t)+C^\top \eta(t),\qquad \varphi(0)=\varphi_0,\;\;\;\varphi(T)=0,\\
    \mu(t)&=B^\top \varphi(t)+D^\top\eta(t).
\end{aligned}\label{eq:ABCDfin_dualnull}
\end{equation}
Then $\mu$ and $\eta$ satisfy the reconstruction identity \eqref{eq:movhor_fin}. Classical systems theory implies that the control problem \eqref{eq:ABCDfin_dualnull} is solvable if the system \eqref{eq:ABCDfin} is observable. Moreover, the solution of \eqref{eq:ABCDfin_dualnull} can be computed offline, so that the online reconstruction requires only the evaluation of the convolution integrals.
This approach can be used for two purposes:
\begin{enumerate}[(a)]
    \item {\bf State estimation:} By choosing a linearly independent family $(\varphi_{01},\ldots,\varphi_{0n})$ and determining corresponding null controls $\eta_1,\ldots,\eta_n$ and outputs $\mu_1,\ldots,\mu_n$ of the adjoint system, we can reconstruct the scalars $\varphi_{0i}^\top x(t)$ for $i=1,\ldots,n$. These values then allow us to determine $x(t)$. If only $N\leq n$ such null controls are used, we obtain the orthogonal projection, and hence the best approximation, of $x(t)$ onto the $N$-dimensional subspace $\operatorname{span}(\varphi_{01},\ldots,\varphi_{0N})$. The reconstruction is particularly simple if the initial values of the adjoint null-control problems form an orthonormal family.
\item {\bf State feedback:} Here, the aim is to realise the state feedback $u(t)=Fx(t)$, $F\in\R^{m\times n}$, using only the measured input and output. To achieve this, we solve $m$ null control problems for the adjoint system, where the initial values $\varphi_{01},\ldots,\varphi_{0m}\in\R^n$ are the transposes of the row vectors of $F$. Then \eqref{eq:movhor_fin} shows that the moving-horizon integrals of the input and output of \eqref{eq:ABCDfin} yield 
\[\left(\begin{smallmatrix}
\varphi_{01}^\top x(t)\\[-1.2mm]\vdots\\
\varphi_{0m}^\top x(t)
\end{smallmatrix}\right)=\left[\begin{smallmatrix}
\varphi_{01}^\top\\[-1.2mm]\vdots\\
\varphi_{0m}^\top
\end{smallmatrix}\right]x(t)=Fx(t).\]
    \end{enumerate}
Although the modulating-function approach is relatively straightforward for finite-dimensional systems, its extension to infinite-dimensional systems involves substantial difficulties, particularly in the presence of boundary control or observation. To cover a broad class of infinite-dimensional linear systems, we employ the system-node framework developed by {\sc Staffans} in \cite{Staffans2005} and related publications. Such systems are described by
\begin{equation}
\spvek{\dot{x}(t)}{y(t)}
= \sbvek{A\&B\\[-1mm]}{C\&D} \spvek{{x}(t)}{u(t)},\label{eq:ODEnode}\end{equation}
where $A\&B:X\times U\supset\dom(A\&B)\to X$ and $C\&D:X\times U\supset\dom(C\&D)\to Y$ are linear operators whose precise properties are specified in the next section. The symbols $\&$ indicate that the domains of these operators need not be Cartesian products of subspaces of $X$ and $U$. Although the comprehensive theory of system nodes developed in \cite{Staffans2005} contains only a~few examples involving partial differential equations, many such systems can be incorporated naturally into this framework, including systems on multidimensional spatial domains. Examples include systems governed by the wave equation, advection--diffusion equations, Maxwell's equations, the Oseen equations, and the Euler--Bernoulli and Timoshenko beam equations \cite{ReSc23a,PhReSc23,FJRS24,ReSc24}.

A further important advantage is that the class of system nodes is closed under adjunction, which allows us to formulate an infinite-dimensional counterpart of the null-control problem \eqref{eq:ABCDfin_dualnull}.

The objective of this article is to develop a modulating-function framework for infinite-dimensional linear systems described by system nodes, with particular emphasis on state estimation and the realisation of state feedback using only input and output data.
For state estimation, we show that, for $\varphi_0\in X$ and suitable functions $\mu\in L^2([0,T];U)$ and $\eta\in L^2([0,T];Y)$ arising from a null-control problem for the adjoint system,
\begin{equation}
    \forall\,t\geq T: \quad \langle x(t),\varphi_0\rangle_X=\int_{0}^T \langle u(t-\tau),\mu(\tau)\rangle_U -\langle y(t-\tau),\eta(\tau)\rangle_Y{\rm d}\tau.
\label{eq:movhor_inf}\end{equation}
Applying this identity simultaneously to initial values $\varphi_{01},\ldots,\varphi_{0k}$ of the adjoint null-control problem yields the orthogonal projection of $x(t)$ onto $\operatorname{span}(\varphi_{01},\ldots,\varphi_{0k})$.

The application to state feedback becomes more involved when the feedback operator is unbounded. More precisely, we consider feedback laws of the form $u=Fx$, where $F$ maps an intermediate space $\mathcal{V}$ between $\dom(A)$ and $X$ into the input space $U$. In this setting, $U$ is assumed to be finite-dimensional. For $U=\R^m$, the feedback operator $F$ can be represented by an $m$-tuple of elements of $\mathcal{V}^\prime$,
where $\mathcal{V}^\prime$ denotes the dual of $\mathcal{V}$ with respect to the pivot space $X$. The space $\mathcal{V}^\prime$ may be regarded as an extension of the state space of the adjoint system. We therefore seek inputs $\eta_i$ that drive the adjoint system, formally initialised at $\varphi(0)=F_i$, to zero in an appropriate sense. Such null controls need not exist within a classical function-space setting. This motivates the extension of the solution theory for system nodes to distributional inputs and trajectories. In this distributional setting, the reconstruction identity \eqref{eq:movhor_inf} requires a more general interpretation. More precisely, the action of $F_i$ on $x(t)$ is represented by convolutions of the distributions $\eta_i$ and $\mu_i$ with the input $u$ and the output $y$, rather than by moving-horizon $L^2$-inner products. The treatment of unbounded feedback operators is particularly important because it allows feedback laws involving spatial point evaluations for systems governed by partial differential equations.

We do not assume that the system or its adjoint is well-posed. Although well-posedness considerably simplifies the analysis, it restricts the class of systems under consideration, and suitable well-posedness criteria are not available for all systems of practical interest. By allowing non-well-posed system nodes, our framework applies to a broader class of boundary-controlled partial differential equations. This generality, together with the treatment of unbounded feedback operators, requires a more extensive theoretical framework.

The remainder of this article is organised as follows. Section~\ref{sec:prelim} introduces the notation, function spaces, and operator-theoretic framework used throughout the article. In Section~\ref{sec:sysnode}, we recall the basic theory of system nodes and develop a distributional solution framework. Section~\ref{sec:dual} considers the adjoint system and its systems-theoretic properties and contains our main result on distributional partial-state reconstruction. Section~\ref{sec:stateest} applies this result to state estimation, while Section~\ref{sec:feedback} addresses the implementation of state feedback. Section~\ref{sec:ex} presents two examples. The first concerns the construction of an exponentially stabilising feedback law for a vibrating string with boundary force control and an interior velocity measurement. The second considers state estimation for a reaction--diffusion equation on a two-dimensional spatial domain with Dirichlet boundary control and Neumann boundary observation.

We conclude this introduction with a brief overview of the literature on the modulating-function approach. The method originates from the work of \textsc{Shinbrot} \cite{Shin1954}. Its basic idea is to use \emph{modulating functions} to transform linear and certain nonlinear ordinary differential equations into algebraic equations, thereby enabling the application of regression methods to the estimation of unknown system parameters. In the following decades, the modulating-function method was applied to parameter estimation for increasingly broad classes of systems. Of particular relevance to the present work are the early applications to systems governed by partial differential equations by \textsc{Perdreauville} and \textsc{Goodson} \cite{PerdG1966} and, subsequently, by \textsc{Fairman} and \textsc{Shen} \cite{FairS1970}. Subsequent developments largely focused on finite-dimensional systems and, in particular, on the choice of suitable modulating functions. The modulating-function method was first applied to state estimation for finite-dimensional systems in \cite{LiuLKPG2014,JoufR2015}, where modulating functions with non-zero boundary values were employed. Since the original work of \textsc{Shinbrot}, modulating functions have typically been required only to satisfy suitable homogeneous boundary conditions and differentiability assumptions, leaving their precise shape as a degree of freedom.

In \cite{FiscD2016}, \textsc{Fischer} and \textsc{Deutscher} developed a modulating-function-based fault-detection framework for systems governed by partial differential equations. Their approach transforms the differential equation into an algebraic relation that gives rise to a \emph{kernel equation}, whose solution determines the modulating function. This approach was subsequently used in \cite{GhafNRLK2020} to construct a state estimator for parabolic partial differential equations. The resulting estimate is given in terms of the coefficients of a truncated series expansion of the distributed state. The approach was extended to coupled parabolic systems in \cite{RojaNRPZ2022} and to parabolic systems with certain nonlinear reaction terms in \cite{GhafNRLK2023}. To the best of our knowledge, a systematic and unified modulating-function framework for infinite-dimensional systems that accommodates boundary control and unbounded feedback operators has not yet been developed.

\section{Notation, function spaces, and operator-theoretic preliminaries}\label{sec:prelim}

The sets of positive and non-negative integers are denoted by $\mathbb{N}$ and $\mathbb{N}_0$, respectively. Unless stated otherwise, all Hilbert spaces considered in this article are real. By complexification, the results can also be formulated for complex Hilbert spaces. The norm on $X$ is denoted by $\|\cdot\|_X$, or simply by $\|\cdot\|$ if the underlying space is clear from the context. The identity operator on $X$ is denoted by $\Id_X$, or simply by $\Id$.

The topological dual of $X$ is denoted by $X^\prime$, and the corresponding duality pairing by $\<\cdot,\cdot\>_{X^\prime,X}$. Using the reflexivity of $X$, we set
$\<x,x^\prime\>_{X,X^\prime}\coloneqq \<x^\prime,x\>_{X^\prime,X}$ for all $x\in X$ and $x^\prime\in X^\prime$.

The space of bounded linear operators from $X$ to $Y$ is denoted by $L(X,Y)$, and we write $L(X)\coloneqq L(X,X)$. The domain $\dom A$ of a possibly unbounded linear operator $A:X\supset\dom A\to Y$ is endowed with the graph norm
\[
\|x\|_{\dom A}\coloneqq \big(\|x\|_{X}^2+\|Ax\|_{Y}^2\big)^{1/2}.
\]
For a densely defined linear operator $A:X\supset\dom A\to Y$, its dual operator $A^\prime:Y^\prime\supset\dom A^\prime\to X^\prime$ is defined by
\[
\dom A^\prime=\setdef{y^\prime\in Y^\prime}{\exists\, z^\prime\in X^\prime\text{ s.t.\ }\forall\,x\in\dom A:\;\langle Ax,y^\prime\rangle_{Y,Y^\prime}=\langle z^\prime,x\rangle_{X^\prime,X}}.
\]
For each $y^\prime\in\dom A^\prime$, the element $z^\prime\in X^\prime$ in the above definition is uniquely determined, and we set $A^\prime y^\prime\coloneqq z^\prime$. If $X$ and $Y$ are identified with their dual spaces, the dual operator is called the {\em adjoint of $A$} and is denoted by $A^*:Y\supset\dom A^*\to X$.

For Lebesgue and Sobolev spaces, we follow the notation of {\sc Adams} \cite{adams2003sobolev}. For spaces of functions taking values in a Hilbert space $X$, the target space is indicated after a semicolon. For example, the space of $p$-integrable $X$-valued functions on $\Omega$ is denoted by $L^p(\Omega;X)$.

For $T>0$ and $k\in\N$, we define
\begin{align}
    H^k_{0l}([0,T];X) &\coloneqq  \setdef{v\in H^k([0,T];X)}{v(0)= \cdots =\tfrac{\mathrm{d}^{k-1}}{\mathrm{d}t^{k-1}}v(0) = 0},\label{eq:Hkl}\\
    H^k_{0r}([0,T];X) &\coloneqq  \setdef{v\in H^k([0,T];X)}{v(T)= \cdots =\tfrac{\mathrm{d}^{k-1}}{\mathrm{d}t^{k-1}}v(T) = 0}.\label{eq:Hkr}
\end{align}
These spaces will be used throughout the article. Both are closed subspaces of $H^k([0,T];X)$ and hence Hilbert spaces with respect to the norm inherited from $H^k([0,T];X)$.

Using $L^2([0,T];X)$ as the pivot space, we define the corresponding negative-order spaces by
\begin{equation}
    H^{-k}_{0l}([0,T];X) \coloneqq  H^k_{0r}([0,T];X)^\prime,\qquad
    H^{-k}_{0r}([0,T];X) \coloneqq  H^k_{0l}([0,T];X)^\prime.
\label{eq:H-kl}
\end{equation}
For $k=0$, we set
\[
H^{0}_{0l}([0,T];X)\coloneqq L^2([0,T];X)=:H^{0}_{0r}([0,T];X).
\]
The Dirac distribution $\delta\in H^{-1}_{0l}([0,T])$ is defined by
\[
\langle\delta,w\rangle_{H^{-1}_{0l}([0,T]),H^{1}_{0r}([0,T])}
=w(0)
\qquad
\forall\,w\in H^{1}_{0r}([0,T]).
\]
For $k\geq0$, the {\em left derivative}
\[
\big(\tfrac{\mathrm{d}}{\mathrm{d}t}\big)_l:
H^{-k}_{0l}([0,T];X)\to H^{-k-1}_{0l}([0,T];X)
\]
is defined as the negative dual of the right derivative, that is,
\begin{align}\label{eq:l2nder}
\begin{split}
   \left\langle \big(\tfrac{\mathrm{d}}{\mathrm{d}t}\big)_l v, w\right\rangle_{H^{-k-1}_{0l}([0,T];X),H^{k+1}_{0r}([0,T];X)}
    &\coloneqq -\left\langle v,\tfrac{\mathrm{d}}{\mathrm{d}t}w\right\rangle_{H^{-k}_{0l}([0,T];X),H^{k}_{0r}([0,T];X)}
    \\
    &\hspace{-2cm}\forall\,v\in H^{-k}_{0l}([0,T];X),\,
    w\in H^{k+1}_{0r}([0,T];X).
\end{split}
\end{align}
For $v\in H^{k}_{0l}([0,T];X)$, $k\in\N$, the left derivative agrees with the conventional weak derivative. Consequently, for every $k\in\Z$,
$\big(\tfrac{\mathrm{d}}{\mathrm{d}t}\big)_l$ is a bounded bijective operator from
$H^{k}_{0l}([0,T];X)$ onto $H^{k-1}_{0l}([0,T];X)$.
The \emph{right derivative}
\[
\big(\tfrac{\mathrm{d}}{\mathrm{d}t}\big)_r:
H^{k}_{0r}([0,T];X)\to H^{k-1}_{0r}([0,T];X),
\qquad k\in\Z,
\]
is defined analogously.

For $\tau\in[0,T]$, the {\em $\tau$-right shift operator}
$S_{r,\tau}\in L(L^2([0,T];X))$ maps $v\in L^2([0,T];X)$ to the function $S_{r,\tau}v$ defined by
\[
(S_{r,\tau}v)(t)=
\begin{cases}
0,&t\in[0,\tau),\\
v(t-\tau),&t\in[\tau,T],
\end{cases}
\]
for almost every $t\in[0,T]$. Its restriction defines a bounded operator on $H^k_{0l}([0,T];X)$. Analogously, the {\em $\tau$-left shift operator} $S_{l,\tau}$ defines a bounded operator on $H^k_{0r}([0,T];X)$. For $k\geq0$, the $\tau$-right shift on $H^{-k}_{0l}([0,T];X)$ is defined as the dual of the $\tau$-left shift on $H^{k}_{0r}([0,T];X)$. With this definition, the right shift on $L^2([0,T];X)$ extends to the spaces $H^{-k}_{0l}([0,T];X)$.

Let $A\in L(X,Y)$ and $k\in\N$. For $w\in H^{k}_{0l}([0,T];X)$, the pointwise application
$Aw\coloneqq \big(t\mapsto Aw(t)\big)$ belongs to $H^{k}_{0l}([0,T];Y)$. Likewise, $Aw\in H^{k}_{0r}([0,T];Y)$ whenever $w\in H^{k}_{0r}([0,T];X)$. The pointwise application of $A$ to an element $v\in H^{-k}_{0l}([0,T];X)$ is defined by
\begin{multline}\label{eq:Apointwl}
    \langle A v, w\rangle_{H^{-k}_{0l}([0,T];Y),H^k_{0r}([0,T];Y)}
    \coloneqq \langle v, A^* w\rangle_{H^{-k}_{0l}([0,T];X),H^k_{0r}([0,T];X)}
    \\
    \forall\,v\in H^{-k}_{0l}([0,T];X),\,
    w\in H^k_{0r}([0,T];Y).
\end{multline}
The pointwise application of $A$ to elements of $H^{-k}_{0r}([0,T];X)$ is defined analogously. With these definitions, the pointwise application of $A$ commutes with both the left and the right derivative, that is,
\[
\big(\tfrac{{\rm d}}{{\rm d}t}\big)_lA
=A\big(\tfrac{{\rm d}}{{\rm d}t}\big)_l,
\qquad
\big(\tfrac{{\rm d}}{{\rm d}t}\big)_rA
=A\big(\tfrac{{\rm d}}{{\rm d}t}\big)_r.
\]

We next introduce function spaces on the infinite time horizon. The elements of the spaces defined above can be canonically identified with distributions in the sense of \textsc{Schwartz}. More precisely, an element of $H^{-k}_{0l}([0,T];X)$, $k\in\N$, defines a distribution on $(-\infty,T)$ supported in $[0,T)$ by
\[
\langle v,\psi\rangle
\coloneqq 
\langle v,\psi|_{[0,T]}\rangle_{H^{-k}_{0l}([0,T];X),H^k_{0r}([0,T];X)},
\qquad
\psi\in C^\infty_0((-\infty,T);X).
\]
The elements of $H^{-k}_{0r}([0,T];X)$ are identified analogously with distributions on $(0,\infty)$ supported in $(0,T]$. Elements of non-negative order are regarded as regular distributions.

Accordingly, the support of an element of any of these spaces is understood as its distributional support in the sense of \textsc{Schwartz}; see \cite[Def.~6.22]{Rudi73}. Equivalently, for $v\in H^{-k}_{0l}([0,T];X)$, a relatively open set $J\subset[0,T)$ is disjoint from $\supp v$ if and only if
\[
\langle v,w\rangle_{H^{-k}_{0l}([0,T];X),H^k_{0r}([0,T];X)}
=0
\]
for every $w\in H^k_{0r}([0,T];X)$ with $\supp w\subset J$. The analogous characterisation applies to elements of $H^{-k}_{0r}([0,T];X)$.

For $k\in\N_0$, the space $H^k_{0,\loc}(\R_{\ge0};X)$ consists of all $v\in L^2_{\loc}(\R_{\ge0};X)$ such that, for every $T>0$, the restriction of $v$ to $[0,T]$ belongs to $H^k_{0l}([0,T];X)$. We equip this space with the locally convex topology generated by the seminorms given by the $H^k_{0l}([0,T];X)$-norms of these restrictions; see \cite[Chap.~1]{Rudi73}.

To define $H^{-k}_{0,\loc}(\R_{\geq0};X)$ for $k>0$, let $0<T_1<T_2$. Extension by zero on $[T_1,T_2]$ defines a continuous embedding of $H_{0r}^k([0,T_1];X)$ into $H_{0r}^k([0,T_2];X)$. Its dual defines the restriction operator from $H_{0l}^{-k}([0,T_2];X)$ to $H_{0l}^{-k}([0,T_1];X)$. An element $v\in H_{0,\loc}^{-k}(\R_{\geq0};X)$ is a family $(v_T)_{T>0}$ with $v_T\in H^{-k}_{0l}([0,T];X)$ such that, whenever $0<T_1<T_2$, the restriction of $v_{T_2}$ to $[0,T_1]$ equals $v_{T_1}$. We equip this space with the locally convex topology generated by the seminorms \[ v\mapsto\|v_T\|_{H^{-k}_{0l}([0,T];X)}, \qquad T>0. \] The support of $v=(v_T)_{T>0}$ is defined by \[ \supp v \coloneqq  \bigcup_{T>0} \big(\supp v_T\cap[0,T)\big). \] With these definitions, $H_{0,\loc}^{0}(\R_{\geq0};X)$ is identified with $L^2_{\loc}(\R_{\geq0};X)$.

The left derivative $\left(\frac{{\rm d}}{{\rm d}t}\right)_l$ extends naturally to $H_{0,\loc}^{k}(\R_{\ge0};X)$ for every $k\in\Z$. It defines a continuous bijection from $H_{0,\loc}^{k}(\R_{\ge0};X)$ onto $H_{0,\loc}^{k-1}(\R_{\ge0};X)$ with continuous inverse. Similarly, the $\tau$-right shift defines a continuous operator on $H_{0,\loc}^{k}(\R_{\ge0};X)$ for every $k\in\Z$, as follows from its boundedness on finite time intervals.

We conclude these preliminaries by introducing convolution. For $v,w\in L^{2}_{\loc}(\R_{\ge0};X)$, we define their convolution $v\ast w\in L^{2}_{\loc}(\R_{\ge0})$ by
\[
(v\ast w)(t)=\int_0^t \langle v(\tau),w(t-\tau)\rangle_X\,{\rm d}\tau.
\]
The convolution $v\ast w$ is continuous. Moreover, convolution is bilinear and commutative and defines a continuous mapping from
$L^{2}_{\loc}(\R_{\ge0};X)\times L^{2}_{\loc}(\R_{\ge0};X)$
to $L^{2}_{\loc}(\R_{\ge0})$. Its restriction to $[0,T]$ depends only on the restrictions of $v$ and $w$ to $[0,T]$.

If $v\in H^{1}_{0,\loc}(\R_{\ge0};X)$ and $w\in L^{2}_{\loc}(\R_{\ge0};X)$, then
$v\ast w\in H^{1}_{0,\loc}(\R_{\ge0})$ and
\[
\big(\tfrac{{\rm d}}{{\rm d}t}\big)_l(v\ast w)
=
\left(\big(\tfrac{{\rm d}}{{\rm d}t}\big)_lv\right)\ast w.
\]
Since the left derivative is a linear homeomorphism from $H^{k}_{0,\loc}(\R_{\geq0};X)$ onto $H^{k-1}_{0,\loc}(\R_{\geq0};X)$ for every $k\in\Z$, convolution extends uniquely to a continuous bilinear mapping 
\[ \ast: H^{k_1}_{0,\loc}(\R_{\geq0};X) \times H^{k_2}_{0,\loc}(\R_{\geq0};X) \to H^{k_1+k_2}_{0,\loc}(\R_{\geq0}) \] defined by 
\begin{equation} \begin{aligned} \big(\tfrac{{\rm d}}{{\rm d}t}\big)_l^{k_1+k_2}(v\ast w) &= \left(\big(\tfrac{{\rm d}}{{\rm d}t}\big)_l^{k_1}v\right) \ast \left(\big(\tfrac{{\rm d}}{{\rm d}t}\big)_l^{k_2}w\right) \end{aligned} \label{eq:convrule} \end{equation}
for all $k_1,k_2\in\Z$, $v\in H^{k_1}_{0,\loc}(\R_{\geq0};X)$, and $w\in H^{k_2}_{0,\loc}(\R_{\geq0};X)$. Here, negative powers of the left derivative denote powers of its continuous inverse.
The convolution $\varphi\ast v\in H^{k_1+k_2}_{0,\loc}(\R_{\geq0};X)$ of a scalar element $\varphi\in H^{k_1}_{0,\loc}(\R_{\geq0})$ and an $X$-valued element $v\in H^{k_2}_{0,\loc}(\R_{\geq0};X)$ is defined analogously.

\section{System nodes and trajectories}\label{sec:sysnode}

Let $X$, $U$, and $Y$ be Hilbert spaces, and let
\(A\&B:\dom(A\&B)\subset X\times U\to X\) and
\(C\&D:\dom(C\&D)\subset X\times U\to Y\)
be linear operators. In this section, we recall the basic concepts required for systems of the form \eqref{eq:ODEnode}.
In contrast to the finite-dimensional case, the operators $A\&B$ and $C\&D$ need not decompose into separate components associated with the state and the input. This feature is essential for the treatment of boundary control systems governed by partial differential equations.
The autonomous dynamics, corresponding to the trivial input $u\equiv0$, are determined by the so-called \emph{main operator}
$A\colon\dom(A)\subset X\to X$,
where $\dom(A)\coloneqq\setdef{x\in X}{\spvek{x}{0}\in\dom(A\&B)}$ and
$Ax\coloneqq A\&B\spvek{x}{0}$ for all $x\in\dom(A)$.

\begin{defn}[System node]\label{def:sysnode}
A {\em system node} on the triple $(X,U,Y)$ of Hilbert spaces is a linear operator
$S=\sbvek{A\&B}{C\&D}$, where
$A\&B:\dom(A\&B)\subset X\times U\to X$ and
$C\&D:\dom(C\&D)\subset X\times U\to Y$, satisfying the following conditions:
\begin{enumerate}[(a)]
    \item\label{def:sysnodea} $A\&B$ is closed.
    \item\label{def:sysnodeb} $C\&D\in L(\dom(A\&B),Y)$.
    \item\label{def:sysnodec} For every $u\in U$, there exists some $x\in X$ such that $\spvek{x}{u}\in\dom(S)$.
    \item\label{def:sysnoded} The main operator $A$ generates a strongly continuous semigroup
    $\mathfrak{A}(\cdot)\colon\R_{\geq0}\to L(X)$ on $X$.
\end{enumerate}
\end{defn}
These properties imply that $S$ is closed and that $\dom(S)=\dom(A\&B)$.

We next introduce classical and generalised trajectories of \eqref{eq:ODEnode}.
The space $C(\R_{\geq0};X)$ is equipped with the locally convex topology generated by the seminorms given by the maximum norm on intervals $[0,T]$, $T>0$.

\begin{defn}[Classical/generalised trajectories]\label{def:traj}
Let $S=\sbvek{A\&B}{C\&D}$ be a system node on $(X,U,Y)$.
A {\em classical trajectory} of \eqref{eq:ODEnode} is a triple
\[
(x,u,y)\in C^1(\R_{\geq0};X)\times C(\R_{\geq0};U)\times C(\R_{\geq0};Y)
\]
that satisfies \eqref{eq:ODEnode} for every $t>0$.
A {\em generalised trajectory} of \eqref{eq:ODEnode} is a limit of classical trajectories in the topology of $C(\R_{\geq0};X)\times L^2_{\loc}(\R_{\geq0};U)\times L^2_{\loc}(\R_{\geq0};Y)$.
\end{defn}

Any operator $A\&B$ satisfying conditions \eqref{def:sysnodea}, \eqref{def:sysnodec}, and \eqref{def:sysnoded} of Definition~\ref{def:sysnode} can be regarded as a system node on $(X,U,\{0\})$. Accordingly, we also speak of classical and generalised trajectories $(x,u)$ of
\begin{align}\label{eq:ODE}
\dot{x}=A\&B\spvek{x}{u}.
\end{align}

We next recall a solvability result from \cite{Staffans2005}. In addition to sufficient regularity of the input, it requires compatibility between the initial state and the initial input value in the sense that the corresponding pair belongs to $\dom(A\&B)$. For boundary control systems, this means that the boundary value of the prescribed initial state is consistent with the input at $t=0$.

\begin{prop}[Existence of classical trajectories {\cite[Lem.~4.7.8]{Staffans2005}}]\label{prop:solex}
Let $S$ be a system node on $(X,U,Y)$, let $x_0\in X$, and let
$u\in W^{2,1}_{\loc}(\R_{\geq0};U)$ satisfy
$\spvek{x_0}{u(0)}\in\dom S$. Then there exist unique functions
$x\in C^1(\R_{\geq0};X)$ and $y\in C(\R_{\geq0};Y)$ such that
$x(0)=x_0$ and $(x,u,y)$ is a classical trajectory of \eqref{eq:ODEnode}.
\end{prop}

To formulate further results on the existence and regularity of trajectories, it is useful to separate $A\&B$ into operators associated with the state and the input, as is customary in the theory of infinite-dimensional systems; see, for instance, \cite{TuWe09}. This separation requires the introduction of extrapolation spaces containing $X$ as a continuously and densely embedded subspace.

\begin{rem}[System nodes]\label{rem:nodes}
Let $S=\sbvek{A\&B}{C\&D}$ be a system node on $(X,U,Y)$.
\begin{enumerate}[(a)]
\item\label{rem:nodesa}
For $k\in\N$, let $X_k\coloneqq \dom(A^k)$, and let $X_{-k}$ be the completion of $X$ with respect to the norm
$\|x\|_{X_{-k}}\coloneqq \|(\lambda\Id-A)^{-k}x\|$, where
$\lambda\in\R$ belongs to the resolvent set of $A$.
For $k\in\Z$, the operator $A$ restricts or extends, as appropriate, to a closed and densely defined operator
$A_k:X_k\supset\dom A_k=X_{k+1}\to X_k$.
Likewise, the semigroup $\frakA(\cdot)$ generated by $A$ restricts or extends to a semigroup $\frakA_k(\cdot)$ on $X_k$, whose generator is $A_k$; see \cite[Prop.~2.10.3 \& 2.10.4]{TuWe09}.

\item\label{rem:nodesb}
There exists an operator $B\in L(U,X_{-1})$ such that
$[A_{-1}\ B]\in L(X\times U,X_{-1})$ extends $A\&B$. Moreover,
\[
\dom(A\&B)=
\setdef{\spvek{x}{u}\in X\times U}{A_{-1}x+Bu\in X};
\]
see \cite[Def.~4.7.2 \& Lem.~4.7.3]{Staffans2005}.

\item\label{rem:nodesc}
For $k\in\Z$, let $X_{d,k}$ denote the scale of spaces constructed as in part~\eqref{rem:nodesa}, but starting from $A^*$.
Then \cite[Prop.~2.10.2]{TuWe09} yields
$X_{d,k}=X_{-k}^\prime$, where the latter denotes the dual of $X_{-k}$ with respect to the pivot space $X$.
\end{enumerate}
\end{rem}

We collect some further properties of classical and generalised trajectories.

\begin{rem}[Classical/generalised trajectories]\label{rem:sols}
Let $S=\sbvek{A\&B}{C\&D}$ be a system node on $(X,U,Y)$.
\begin{enumerate}[(a)]
\item\label{rem:sols1}
If $(x,u)$ is a classical trajectory of
$\dot{x}=A\&B\spvek{x}{u}$, then
\[
\spvek{x}{u}\in C(\R_{\geq0};\dom S).
\]

\item\label{rem:sols2}
A pair $(x,u)\in
C(\R_{\geq0};X)
\times L^2_{\loc}(\R_{\geq0};U)$
is a generalised trajectory of
$\dot{x}=A\&B\spvek{x}{u}$ if and only if
\begin{equation}
\forall\,t\geq0:\quad
x(t)=\frakA(t)x(0)
+\int_0^t\frakA_{-1}(t-\tau)Bu(\tau)\,{\rm d}\tau,
\label{eq:mildsol}
\end{equation}
where the integral is understood as an $X_{-1}$-valued integral.

The output expression
\[
y(t)=C\&D\spvek{x(t)}{u(t)}
\]
is not necessarily defined pointwise. However,
\cite[Lem.~4.7.9]{Staffans2005} shows that the second time primitive
\[
g
\coloneqq 
\int_0^\cdot(\cdot-\tau)
\spvek{x(\tau)}{u(\tau)}\,{\rm d}\tau
\]
belongs to
\[
C(\R_{\geq0};\dom(A\&B))
\subset
L^2_{\loc}(\R_{\geq0};\dom(A\&B)).
\]
We may therefore define the distributional output by
\begin{equation}
y
\coloneqq 
\big(\tfrac{{\rm d}}{{\rm d}t}\big)_l^2
\big(C\&Dg\big)
=
C\&D\spvek{x}{u}
\in H^{-2}_{0,\loc}(\R_{\geq0};Y),
\label{eq:ydef}
\end{equation}
where the final equality follows from the pointwise application of
bounded operators introduced in \eqref{eq:Apointwl}.

Consequently, a triple
\[
(x,u,y)\in
C(\R_{\geq0};X)
\times L^2_{\loc}(\R_{\geq0};U)
\times L^2_{\loc}(\R_{\geq0};Y)
\]
is a generalised trajectory of \eqref{eq:ODEnode} if and only if
$(x,u)$ is a generalised trajectory of \eqref{eq:ODE} and the
distribution defined in \eqref{eq:ydef} is represented by the
$L^2_{\loc}$-function $y$.

\item
Consider the autonomous case $u=0$, and define
$C\in L(\dom(A),Y)$ by
\[
Cx=C\&D\spvek{x}{0}.
\]
If $x_0\in X_1=\dom(A)$, then
Remark~\ref{rem:nodes}\,\eqref{rem:nodesa} yields
\[
x=\frakA(\cdot)x_0\in C(\R_{\geq0};\dom A),
\]
and hence
\[
y\coloneqq Cx\in C(\R_{\geq0};Y)
\subset L^2_{\loc}(\R_{\geq0};Y).
\]
To define the output for less regular initial states, fix
$\lambda\in\rho(A)$. For every $j\in\Z$, the operator
\[
\lambda\Id-A_j:X_{j+1}\to X_j
\]
is an isomorphism and commutes with the corresponding semigroups.
Moreover,
\[
\frakA_j(\cdot)(\lambda\Id-A_j)x_0
=
\left(
\lambda\Id-\tfrac{{\rm d}}{{\rm d}t}
\right)
\frakA_{j+1}(\cdot)x_0,
\qquad
x_0\in X_{j+1},
\]
where $\tfrac{{\rm d}}{{\rm d}t}$ denotes the distributional
derivative; see \cite[Chap.~II, Lem.~1.3]{EngeNage00}.
By iteration, it follows that
\[
\frakA(\cdot)x_0
\in
\begin{cases}
H^{k-1}_{\loc}(\R_{\geq0};X_1),
& k\geq1,\\
H^{k-1}_{0,\loc}(\R_{\geq0};X_1),
& k\leq0,
\end{cases}
\qquad
x_0\in X_k.
\]
For $k\in\Z$, let $\mathcal{Y}_k$ denote
$H^{k-1}_{\loc}(\R_{\geq0};Y)$ if $k\geq1$ and
$H^{k-1}_{0,\loc}(\R_{\geq0};Y)$ if $k\leq0$.
Since $C\in L(X_1,Y)$, its pointwise or distributional application,
as introduced in \eqref{eq:Apointwl}, gives
\[
\forall\,k\in\Z,\ x_0\in X_k:\quad
C\frakA(\cdot)x_0
\in\mathcal{Y}_k.
\]
Thus, the {\em state-to-output map}
\begin{equation}
\begin{aligned}
\frakC_k:\quad
X_k&\to\mathcal{Y}_k,\\
x_0&\mapsto C\frakA(\cdot)x_0,
\end{aligned}
\label{eq:genSOmap}
\end{equation}
is well defined and continuous.

\end{enumerate}
\end{rem}

Remark~\ref{rem:sols} motivates the introduction of the continuous
{\em input-to-state map}
\begin{align}
\frakS:\quad
L^2_{\loc}(\R_{\geq0};U)
&\to L^2_{\loc}(\R_{\geq0};X_{-1}),
\label{eq:genISmap}\\
u&\mapsto
\int_0^\cdot
\frakA_{-1}(\cdot-\tau)Bu(\tau)\,{\rm d}\tau.
\nonumber
\end{align}
It maps an input to the corresponding state trajectory with zero initial
state. For every $T>0$, the restriction of $\frakS u$ to $[0,T]$ belongs
to $C([0,T];X_{-1})$. We nevertheless use
$L^2_{\loc}(\R_{\geq0};X_{-1})$ as the codomain in
\eqref{eq:genISmap}, since this allows us to incorporate $\frakS$ into
the Sobolev-space framework introduced above.

Remark~\ref{rem:sols}\,\eqref{rem:sols2} further implies that the mapping
\begin{equation}
u\mapsto\spvek{\frakS u}{u}
\label{eq:uA&B}
\end{equation}
is continuous from $L^2_{\loc}(\R_{\geq0};U)$ to
$H^{-2}_{0,\loc}(\R_{\geq0};\dom(A\&B))$.

In this notation, well-posedness is equivalent to the following three
properties: the map $\frakC_0$ takes values in
$L^2_{\loc}(\R_{\geq0};Y)$, the map $\frakS$ takes values in
$C(\R_{\geq0};X)$, and
\[
u\mapsto C\&D\spvek{\frakS u}{u}
\]
maps $L^2_{\loc}(\R_{\geq0};U)$ into
$L^2_{\loc}(\R_{\geq0};Y)$.

It follows from Remark~\ref{rem:sols} that
\[
(x,u,y)\in
C(\R_{\geq0};X)
\times L^2_{\loc}(\R_{\geq0};U)
\times L^2_{\loc}(\R_{\geq0};Y)
\]
is a generalised trajectory of \eqref{eq:ODEnode} if and only if there
exists some $x_0\in X$ such that
\[
x=\frakA(\cdot)x_0+\frakS u,
\qquad
y=\frakC_0x_0+C\&D\spvek{\frakS u}{u}.
\]
This characterisation provides the basis for the distributional
extension of the trajectory concept.

By shift invariance,
\[
\forall\,u\in C_0^\infty(\R_{>0};U):\quad
\frakS\big(\tfrac{{\rm d}}{{\rm d}t}\big)_lu
=
\big(\tfrac{{\rm d}}{{\rm d}t}\big)_l\frakS u.
\]
Since $C_0^\infty(\R_{>0};U)$ is dense in
$H^1_{0,\loc}(\R_{\geq0};U)$, this identity extends by continuity to
every $u\in H^1_{0,\loc}(\R_{\geq0};U)$. In particular,
\[
\frakS u\in H^1_{0,\loc}(\R_{\geq0};X_{-1})
\]
and
\[
\frakS\big(\tfrac{{\rm d}}{{\rm d}t}\big)_lu
=
\big(\tfrac{{\rm d}}{{\rm d}t}\big)_l\frakS u.
\]
Iteration shows that $\frakS$ restricts to a continuous operator from
$H^k_{0,\loc}(\R_{\geq0};U)$ to
$H^k_{0,\loc}(\R_{\geq0};X_{-1})$ for every $k\in\N_0$.

For $m\in\N$, the continuous extension to
$H^{-m}_{0,\loc}(\R_{\geq0};U)$ is uniquely determined by
$\frakS_{-m}u
\coloneqq 
\big(\tfrac{{\rm d}}{{\rm d}t}\big)_l^m
\frakS
\big(\tfrac{{\rm d}}{{\rm d}t}\big)_l^{-m}u$.
Thus, for every $k\in\Z$, we obtain the continuous
{\em distributional input-to-state map}
\begin{align}
\frakS_k:\quad
H^k_{0,\loc}(\R_{\geq0};U)
&\to H^k_{0,\loc}(\R_{\geq0};X_{-1}),
\label{eq:DistrOp1}
\end{align}
where $\frakS_0=\frakS$.

Applying the same extension procedure to the continuous mapping
\eqref{eq:uA&B} yields
\begin{equation}
\sbvek{\frakS_k}{\Id}:
H^k_{0,\loc}(\R_{\geq0};U)
\to
H^{k-2}_{0,\loc}(\R_{\geq0};\dom(A\&B))
\quad\text{is continuous}.
\label{eq:DistrOp2}
\end{equation}

The distributional input-to-state map commutes with scalar convolution.

\begin{lem}\label{lem:convsol}
Let $S$ be a system node on $(X,U,Y)$, let $k,l\in\Z$, and let
\[
u\in H^k_{0,\loc}(\R_{\geq0};U),
\qquad
\alpha\in H^l_{0,\loc}(\R_{\geq0}).
\]
Then
\[
\frakS_{k+l}(\alpha\ast u)
=
\alpha\ast\frakS_ku.
\]
\end{lem}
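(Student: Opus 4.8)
The plan is to prove the identity first in the base case $k=l=0$, where everything reduces to genuine $L^2_\loc$ functions and the claim $\frakS(\alpha\ast u)=\alpha\ast\frakS u$ is nothing but associativity and commutativity of convolution, and then to bootstrap to arbitrary $k,l\in\Z$ using the two structural facts already available: that $\frakS$ intertwines with the left derivative, and that the left derivative interacts with convolution via the rule \eqref{eq:convrule}.

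For the base case I would write out $\frakS u$ as the Bochner integral $(\frakS u)(t)=\int_0^t\frakA_{-1}(t-\tau)Bu(\tau)\,{\rm d}\tau$ and expand $\frakS(\alpha\ast u)$ and $\alpha\ast\frakS u$ accordingly, so that each side becomes a double integral over the triangle $\{0\le\sigma\le\tau\le t\}$. Since $\frakA_{-1}(\cdot)$ is strongly continuous, hence locally bounded as a map into $L(U,X_{-1})$, while $\alpha$ and $u$ are locally $L^2$, the integrand is Bochner integrable on each compact time interval and Fubini applies. After interchanging the order of integration and substituting $r=\tau-\sigma$, both sides collapse to $\int_0^t\big(\int_0^{t-\sigma}\frakA_{-1}(t-\sigma-r)\alpha(r)\,{\rm d}r\big)Bu(\sigma)\,{\rm d}\sigma$, up to the commutativity $\int_0^s\frakA_{-1}(s-r)\alpha(r)\,{\rm d}r=\int_0^s\alpha(s-r)\frakA_{-1}(r)\,{\rm d}r$ of the scalar--operator convolution. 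Hence they agree, which gives $\frakS(\alpha\ast u)=\alpha\ast\frakS u$ for $\alpha\in L^2_\loc(\R_{\ge0})$, $u\in L^2_\loc(\R_{\ge0};U)$.

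For the general case, set $\tilde u:=\big(\tfrac{{\rm d}}{{\rm d}t}\big)_l^{k}u\in L^2_\loc(\R_{\ge0};U)$ and $\tilde\alpha:=\big(\tfrac{{\rm d}}{{\rm d}t}\big)_l^{l}\alpha\in L^2_\loc(\R_{\ge0})$; this is legitimate because the left derivative is a homeomorphism between $H^{j}_{0,\loc}$ for consecutive $j$, so that $u=\big(\tfrac{{\rm d}}{{\rm d}t}\big)_l^{-k}\tilde u$ and $\alpha=\big(\tfrac{{\rm d}}{{\rm d}t}\big)_l^{-l}\tilde\alpha$. Using \eqref{eq:convrule} twice, the base case, and the fact that $\frakS$ commutes with every integer power of $\big(\tfrac{{\rm d}}{{\rm d}t}\big)_l$ (which holds precisely because each $\frakS_k$ is defined as the restriction resp.\ extension enforcing this intertwining), I would compute
\begin{align*}
\frakS_{k+l}(\alpha\ast u)
&=\frakS_{k+l}\big(\tfrac{{\rm d}}{{\rm d}t}\big)_l^{-k-l}(\tilde\alpha\ast\tilde u)
=\big(\tfrac{{\rm d}}{{\rm d}t}\big)_l^{-k-l}\frakS(\tilde\alpha\ast\tilde u)\\
&=\big(\tfrac{{\rm d}}{{\rm d}t}\big)_l^{-k-l}(\tilde\alpha\ast\frakS\tilde u)
=\alpha\ast\big(\tfrac{{\rm d}}{{\rm d}t}\big)_l^{-k}\frakS\tilde u
=\alpha\ast\frakS_k u,
\end{align*}
where the first line applies \eqref{eq:convrule} with exponents $-l$ on $\tilde\alpha$ and $-k$ on $\tilde u$ together with the intertwining, the middle equality is the base case (note $\tilde\alpha\ast\tilde u\in L^2_\loc(\R_{\ge0};U)$), and the last line reverses \eqref{eq:convrule} and uses the intertwining once more with $u=\big(\tfrac{{\rm d}}{{\rm d}t}\big)_l^{-k}\tilde u$.

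I expect the only real obstacle to be the base case, specifically making the Fubini interchange rigorous for the $X_{-1}$-valued Bochner integrals and verifying that $\frakS$ genuinely acts as convolution against the semigroup kernel $t\mapsto\frakA_{-1}(t)B$. The bootstrapping step is then purely formal, provided one is careful that \eqref{eq:convrule} and the derivative intertwining may be invoked with negative exponents, which is guaranteed by the homeomorphism property of the left derivative on the scale $H^{k}_{0,\loc}$.
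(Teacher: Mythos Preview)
Your proposal is correct and follows essentially the same route as the paper. The paper's proof is extremely terse---it says only that the case $k,l\ge0$ follows from the mild-solution formula \eqref{eq:mildsol} and that negative exponents are handled ``by taking limits''---and your Fubini argument for the base case together with the algebraic reduction via the left-derivative intertwining is exactly what makes those two sentences precise; since the maps $\frakS_k$ and the distributional convolution are \emph{defined} through that intertwining, your bootstrap and the paper's ``limit'' are the same step written two ways.
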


\begin{proof}
For $\alpha\in C_0^\infty(\R_{>0})$ and
$u\in C_0^\infty(\R_{>0};U)$, the assertion follows from
\eqref{eq:genISmap} by changing the order of integration. The general
case follows from the density of $C_0^\infty(\R_{>0})$ and
$C_0^\infty(\R_{>0};U)$ in the corresponding spaces
$H^l_{0,\loc}(\R_{\geq0})$ and
$H^k_{0,\loc}(\R_{\geq0};U)$, respectively, together with the
continuity of convolution and of the operators $\frakS_j$.
\end{proof}

\section{Duality and the modulating function}
\label{sec:dual}

In this section, we develop the modulating-function framework for system nodes. We first introduce the adjoint system node and then formulate a distributional null-control problem that forms the basis of our reconstruction result.

Let $S=\sbvek{A\&B}{C\&D}$ be a system node on $(X,U,Y)$. Its {\em adjoint system node}
\[
S^*=\left(\sbvek{A\&B}{C\&D}\right)^*
\]
is again a system node by \cite[Lem.~6.2.14]{Staffans2005}. We write
\begin{equation}\label{eq:Sstar}
S^*=\sbvek{A^d\&B^d}{C^d\&D^d}
\end{equation}
and consider the corresponding {\em adjoint system}
\begin{equation}
\spvek{\dot{\varphi}(t)}{\mu(t)}
=
\sbvek{A^d\&B^d}{C^d\&D^d}
\spvek{\varphi(t)}{\eta(t)}.
\label{eq:ODEnodeadj}
\end{equation}
Moreover, \cite[Lem.~6.2.14]{Staffans2005} shows that the main operator of $S^*$ is the adjoint of the main operator of $S$, that is, $A^d=A^*$.

We now formulate the main idea underlying the modulating-function approach. Our framework extends the classical theory in two respects: it applies to infinite-dimensional systems represented by system nodes, and it allows for distributional null controls.

Let $K\&L\in L(\dom(A\&B),\R)=\dom(A\&B)^\prime$. For a system \eqref{eq:ODEnode}, we aim to reconstruct the additional output
\begin{equation}
z(t)=K\&L\spvek{x(t)}{u(t)}
\label{eq:zout}
\end{equation}
from the past input $u$ and output $y$. It follows directly from the definition of a system node that
\begin{equation}
S_{\rm ext}
=
\left[
\begin{smallmatrix}
A\&B\\
C\&D\\
K\&L
\end{smallmatrix}
\right]
\label{eq:Sext}
\end{equation}
is a system node on $(X,U,Y\times\R)$.

Writing the adjoint of $S_{\rm ext}$ as
\begin{equation}
S_{\rm ext}^*
=
\left[
\begin{smallmatrix}
A^d\&[B^d\,G]\\
C^d\&[D^d\,H]
\end{smallmatrix}
\right],
\label{eq:Sextadj}
\end{equation}
we consider a distributional input $\eta$ such that
\begin{equation}
\spvek{\dot{\varphi}}{\mu}
=
\left[
\begin{smallmatrix}
A^d\&[B^d\,G]\\
C^d\&[D^d\,H]
\end{smallmatrix}
\right]
\left(
\begin{smallmatrix}
\varphi\\
\eta\\
\delta
\end{smallmatrix}
\right),
\label{eq:deltanullcontrol}
\end{equation}
where $\delta\in H^{-1}_{0,\loc}(\R_{\geq0})$ denotes the Dirac distribution at zero. We require $\varphi$, $\eta$, and $\mu$ to vanish on $[T,\infty)$ for some $T>0$.

More precisely, let $k\in\N_0$, let
$\eta\in H^{-k}_{0,\loc}(\R_{\geq0};Y)$, and set
$r\coloneqq \max\{k,1\}$.
Then
\[
\spvek{\eta}{\delta}
\in
H^{-r}_{0,\loc}(\R_{\geq0};Y\times\R).
\]
Let $\frakS^d_{-r,{\rm ext}}$ denote the distributional
input-to-state map associated with the system node $S_{\rm ext}^*$.
We require that there exist
\[
\varphi\in H^{-k}_{0,\loc}(\R_{\geq0};X),
\qquad
\mu\in H^{-k-2}_{0,\loc}(\R_{\geq0};U),
\]
such that
\begin{equation}
\spvek{\varphi}{\mu}
=
\left[
\begin{smallmatrix}
\Id\qquad0\quad\\
C^d\&[D^d\,H]
\end{smallmatrix}
\right]
\sbvek{\frakS^d_{-r,{\rm ext}}}{\Id}
\spvek{\eta}{\delta},
\label{eq:extadjcontrol}
\end{equation}
and
\begin{equation}
\supp\varphi\subset[0,T],\qquad
\supp\eta\subset[0,T],\qquad
\supp\mu\subset[0,T].
\label{eq:extadjsupp}
\end{equation}
An input $\eta$ satisfying \eqref{eq:extadjcontrol} and
\eqref{eq:extadjsupp} is called a {\em generalised null control in
time $T$}.

This terminology is consistent with the classical notion of null control. If $\varphi$, $\eta$, and $\mu$ are functions rather than genuine distributions, then \eqref{eq:extadjsupp} means that the state $\varphi$ is driven to zero by time $T$.

A particularly important special case arises when the additional scalar output depends only on the state. Suppose that there exists some $\varphi_0\in X$ such that
\[
K\&L\spvek{x}{u}
=
\langle x,\varphi_0\rangle_X
\qquad
\forall\,\spvek{x}{u}\in\dom(A\&B).
\]
Then $\dom(S_{\rm ext}^*)=\dom(S^*)\times\R$
and
\[
S_{\rm ext}^*
\left(
\begin{smallmatrix}
\varphi\\
\eta\\
\lambda
\end{smallmatrix}
\right)
=
\sbvek{A^d\&B^d}{C^d\&D^d}
\spvek{\varphi}{\eta}
+
\spvek{\lambda\varphi_0}{0}
\]
for all $\spvek{\varphi}{\eta}\in\dom(S^*)$ and $\lambda\in\R$. In this case, \eqref{eq:deltanullcontrol} describes the adjoint system with the initial state $\varphi_0$, which is driven to zero in time $T$.

In the distributional setting, the moving-horizon integral in \eqref{eq:movhor_inf} is replaced by the convolution expression $u\ast\mu-y\ast\eta$.
We say that this convolution coincides with the additional output
$z=K\&L\spvek{x}{u}$ after time $T$ if
\begin{equation}
\supp\big(u\ast\mu-y\ast\eta-z\big)\subset[0,T].
\label{eq:maineq}
\end{equation}
Thus, the two distributions agree on $(T,\infty)$.

We are now in a position to state the main result of this section. It shows that a generalised null control of the extended adjoint system yields a modulating distribution that reconstructs the additional output $z$. This distributional formulation substantially broadens the scope of the method and is particularly relevant for the realisation of unbounded state feedback considered in Section~\ref{sec:feedback}.
\begin{thm}\label{thm:main}
Let $S=\sbvek{A\&B\\[-2mm]}{C\&D}$ be a system node on $(X,U,Y)$, let
\[
K\&L\in L(\dom(A\&B),\R)=\dom(A\&B)^\prime,
\]
and let $(x,u,y)$ be a generalised trajectory of \eqref{eq:ODEnode}.
Let $S_{\rm ext}$ be defined as in \eqref{eq:Sext}.
Let $T>0$ and assume that, for some $k\in\N_0$,
\[
\eta\in H^{-k}_{0,\loc}(\R_{\geq0};Y),\qquad
\varphi\in H^{-k}_{0,\loc}(\R_{\geq0};X),\qquad
\mu\in H^{-k-2}_{0,\loc}(\R_{\geq0};U),
\]
and that \eqref{eq:extadjcontrol} and \eqref{eq:extadjsupp} hold.
Then, for
\[
z=K\&L\spvek{x}{u}\in H^{-2}_{0,\loc}(\R_{\geq0})
\subset H^{-k-2}_{0,\loc}(\R_{\geq0}),
\]
the identity \eqref{eq:maineq} holds.
\end{thm}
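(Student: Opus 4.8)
The plan is to reduce the support statement \eqref{eq:maineq} to the single distributional identity
\[
u\ast\mu-y\ast\eta-z=-\langle x_0,\varphi(\cdot)\rangle_X,\qquad x_0:=x(0).
\]
Once this is in hand the theorem is immediate: $\langle x_0,\cdot\rangle_X\in L(X,\R)$ acts by pointwise multiplication in the sense of \eqref{eq:Apointwl}, so $\langle x_0,\varphi(\cdot)\rangle_X\in H^{-k}_{0,\loc}(\R_{\ge0})$ with support contained in $\supp\varphi\subset[0,T]$, which is exactly \eqref{eq:maineq}. All the work lies in the displayed identity.

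The engine is a convolved form of the adjoint (Green's) relation for the pair $S_{\rm ext}$, $S_{\rm ext}^*$. Using the combined time-convolution with spatial pairing in $X\times Y\times\R$, respectively $X\times U$, I would compare
\[
P:=\Big(A\&B\spvek xu\Big)\ast\varphi+y\ast\eta+z\ast\delta,\qquad Q:=x\ast\big(\tfrac{{\rm d}}{{\rm d}t}\big)_l\varphi+u\ast\mu.
\]
Here the state block of $S_{\rm ext}\spvek xu$ is $A\&B\spvek xu$ and its output block is $\spvek yz$, while from \eqref{eq:extadjcontrol} together with the defining state equation of the distributional input-to-state map $\frakS^d_{k,\rm ext}$ one has $S_{\rm ext}^*\left(\begin{smallmatrix}\varphi\\ \eta\\ \delta\end{smallmatrix}\right)=\spvek{(\tfrac{{\rm d}}{{\rm d}t})_l\varphi}{\mu}$. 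The pointwise adjoint relation $\langle S_{\rm ext}\xi,\zeta\rangle=\langle\xi,S_{\rm ext}^*\zeta\rangle$, valid for arbitrary $\xi\in\dom S_{\rm ext}$ and $\zeta\in\dom S_{\rm ext}^*$ evaluated at two (in general different) times, integrates under the convolution to give $P=Q$.

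Two reductions then finish the identity. First, $z\ast\delta=z$. Second, the convolution differentiation rule \eqref{eq:convrule} yields $\big(\tfrac{{\rm d}}{{\rm d}t}\big)_l(x\ast\varphi)=\big(\big(\tfrac{{\rm d}}{{\rm d}t}\big)_lx\big)\ast\varphi=x\ast\big(\tfrac{{\rm d}}{{\rm d}t}\big)_l\varphi$; substituting $A\&B\spvek xu=\big(\tfrac{{\rm d}}{{\rm d}t}\big)_lx-\delta\otimes x_0$ into $P$ therefore cancels the two derivative terms against each other in $P=Q$, and what survives is precisely $-(\delta\otimes x_0)\ast\varphi+y\ast\eta+z=u\ast\mu$. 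Since convolution with $\delta\otimes x_0$ (the $X$-valued distribution $t\mapsto x_0\delta(t)$) is evaluation, $(\delta\otimes x_0)\ast\varphi=\langle x_0,\varphi(\cdot)\rangle_X$, and the claimed identity follows. Note that this also shows $z=K\&L\spvek xu$ is the correct reconstructed output and that the difference in fact lies in $H^{-k}_{0,\loc}(\R_{\ge0})$.

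The delicate point, on which I would spend the most care, is justifying $P=Q$ when $\varphi,\eta,\mu$ and the forcing $\delta$ are genuine distributions rather than functions. I would handle this by a twofold density reduction. For fixed adjoint data, $P$ and $Q$ are continuous and bilinear in the primal trajectory in the topology of Definition~\ref{def:traj}, so it suffices to treat classical trajectories $(x,u,y)$, for which $\spvek xu\in C(\R_{\ge0};\dom S_{\rm ext})$ and $A\&B\spvek xu=\big(\tfrac{{\rm d}}{{\rm d}t}\big)_lx-\delta\otimes x_0$ holds exactly. With the primal side classical, I would approximate the domain-valued distribution $\left(\begin{smallmatrix}\varphi\\ \eta\\ \delta\end{smallmatrix}\right)\in H^{-m}_{0,\loc}(\R_{\ge0};\dom S_{\rm ext}^*)$ (of finite order $m$, by \eqref{eq:DistrOp2}) by smooth $\dom S_{\rm ext}^*$-valued functions, for which the pointwise adjoint relation integrates verbatim to $P=Q$; passing to the limit is legitimate because convolution is continuous on the $H^{\bullet}_{0,\loc}$-scale and $S_{\rm ext}^*\in L(\dom S_{\rm ext}^*,X\times U)$ acts continuously through \eqref{eq:Apointwl}. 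The real obstacle is not a single hard estimate but the bookkeeping across the extrapolation scales $X_{\pm1}$ and $X_{d,\pm1}=X_{\mp1}'$ (Remark~\ref{rem:nodes}\,\eqref{rem:nodesc}), which one must track so that every convolution and every spatial pairing above lands in a well-defined space.
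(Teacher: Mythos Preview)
Your proof is correct and follows essentially the same route as the paper: reduce to classical primal trajectories by density, regularize the distributional adjoint data, exploit the duality relation between $S_{\rm ext}$ and $S_{\rm ext}^*$ under the convolution pairing, and arrive at the identity $\langle x_0,\varphi(\cdot)\rangle_X=-(u\ast\mu)+(y\ast\eta)+z$, from which \eqref{eq:maineq} is immediate via $\supp\varphi\subset[0,T]$. The only organizational difference is that the paper carries out the regularization with an explicit mollifier sequence $(\alpha_n)\subset C_0^\infty(\R_{>0})$ (so that $\alpha_n\ast\delta=\alpha_n$ and, by Lemma~\ref{lem:convsol}, the mollified triple still solves the adjoint ODE), and then computes $\tfrac{{\rm d}}{{\rm d}\tau}\langle x(t-\tau),\varphi_n(\tau)\rangle_X$ pointwise before integrating in $\tau$, whereas you formulate the same computation as the convolution Green's identity $P=Q$ and appeal to abstract density in $H^{-m}_{0,\loc}(\R_{\ge0};\dom S_{\rm ext}^*)$.
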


\begin{proof}
We first prove the assertion under the additional assumption that
$(x,u,y)$ is a classical trajectory of \eqref{eq:ODEnode}.

Let $(\alpha_n)$ be a mollifier sequence in
$C_0^\infty(\R_{>0})$, that is, for every $n\in\N$,
\begin{enumerate}[(i)]
    \item $\int_0^\infty\alpha_n(t)\,{\rm d}t=1$,
    \item $\supp\alpha_n\subset(0,1/n]$, and
    \item $\alpha_n(t)\geq0$ for all $t\in\R_{\geq0}$.
\end{enumerate}
Define $\varphi_n\coloneqq \alpha_n\ast\varphi$, $\eta_n\coloneqq \alpha_n\ast\eta$, $\mu_n\coloneqq \alpha_n\ast\mu$.
Since $\alpha_n\ast\delta=\alpha_n$, Lemma~\ref{lem:convsol},
\eqref{eq:extadjcontrol}, and \eqref{eq:DistrOp2}, applied to the
system node $S_{\rm ext}^*$, imply that
\[
\left(
\begin{smallmatrix}
\varphi_n\\
\eta_n\\
\alpha_n
\end{smallmatrix}
\right)
=
\alpha_n\ast
\left(
\begin{smallmatrix}
\varphi\\
\eta\\
\delta
\end{smallmatrix}
\right)
\in
C^\infty(\R_{\geq0};\dom(S_{\rm ext}^*)).
\]
In particular, the adjoint system equation holds pointwise:
\[
\forall\,t\geq0:\quad
\spvek{\dot{\varphi}_n(t)}{\mu_n(t)}
=
S_{\rm ext}^*
\left(
\begin{smallmatrix}
\varphi_n(t)\\
\eta_n(t)\\
\alpha_n(t)
\end{smallmatrix}
\right)
=
\left[
\begin{smallmatrix}
A^d\&[B^d\,G]\\
C^d\&[D^d\,H]
\end{smallmatrix}
\right]
\left(
\begin{smallmatrix}
\varphi_n(t)\\
\eta_n(t)\\
\alpha_n(t)
\end{smallmatrix}
\right).
\]
Moreover, $\varphi_n$ vanishes in a neighbourhood of zero and hence
$\varphi_n(0)=0$.

Using the support property of convolution, together with
\eqref{eq:extadjsupp} and
$\supp\alpha_n\subset(0,1/n]$, gives
\begin{equation}
\supp\varphi_n\subset[0,T+1/n],\qquad
\supp\eta_n\subset[0,T+1/n],\qquad
\supp\mu_n\subset[0,T+1/n].
\label{eq:extadjsupp_ext}
\end{equation}

For $t\geq0$ and $\tau\in[0,t]$, the product rule yields
\begin{align*}
&\phantom{=}
\tfrac{\mathrm{d}}{\mathrm{d}\tau}
\langle x(t-\tau),\varphi_n(\tau)\rangle_X
\\
&=
\langle x(t-\tau),\dot{\varphi}_n(\tau)\rangle_X
-
\langle\dot{x}(t-\tau),\varphi_n(\tau)\rangle_X
\\
&=
\left\langle
x(t-\tau),
A^d\&[B^d\,G]
\left(
\begin{smallmatrix}
\varphi_n(\tau)\\
\eta_n(\tau)\\
\alpha_n(\tau)
\end{smallmatrix}
\right)
\right\rangle_X
-
\left\langle
A\&B\spvek{x(t-\tau)}{u(t-\tau)},
\varphi_n(\tau)
\right\rangle_X
\\
&=
\underbrace{
\left\langle
\spvek{x(t-\tau)}{u(t-\tau)},
S_{\rm ext}^*
\left(
\begin{smallmatrix}
\varphi_n(\tau)\\
\eta_n(\tau)\\
\alpha_n(\tau)
\end{smallmatrix}
\right)
\right\rangle_{X\times U}
-
\left\langle
S_{\rm ext}\spvek{x(t-\tau)}{u(t-\tau)},
\left(
\begin{smallmatrix}
\varphi_n(\tau)\\
\eta_n(\tau)\\
\alpha_n(\tau)
\end{smallmatrix}
\right)
\right\rangle_{X\times Y\times\R}
}_{=0}
\\
&\qquad
-\langle u(t-\tau),\mu_n(\tau)\rangle_U
+\langle y(t-\tau),\eta_n(\tau)\rangle_Y
+\alpha_n(\tau)z(t-\tau).
\end{align*}
Integrating with respect to $\tau$ over $[0,t]$ and using
$\varphi_n(0)=0$, we obtain
\[
\forall\,t\geq0:\quad
\langle x(0),\varphi_n(t)\rangle_X
=
-(u\ast\mu_n)(t)
+(y\ast\eta_n)(t)
+(\alpha_n\ast z)(t).
\]
Since \eqref{eq:extadjsupp_ext} implies that
$\varphi_n(t)=0$ for all $t\geq T+1/n$, it follows that
\begin{equation}
\supp\left(
u\ast\mu_n-y\ast\eta_n-\alpha_n\ast z
\right)
\subset[0,T+1/n]
\qquad
\forall\,n\in\N.
\label{eq:conv_n}
\end{equation}
By \cite[Thm.~4.15]{Alt16}, for every Hilbert space $E$ and every
$w\in L^2_{\loc}(\R_{\geq0};E)$,
\[
\alpha_n\ast w\longrightarrow w
\quad\text{in }L^2_{\loc}(\R_{\geq0};E).
\]
This approximation property extends to all orders of the Sobolev scale.
Indeed, let $j\in\Z$ and
$w\in H^j_{0,\loc}(\R_{\geq0};E)$. By \eqref{eq:convrule},
\[
\big(\tfrac{{\rm d}}{{\rm d}t}\big)_l^j
(\alpha_n\ast w)
=
\alpha_n\ast
\big(\tfrac{{\rm d}}{{\rm d}t}\big)_l^j w.
\]
Since
$\big(\tfrac{{\rm d}}{{\rm d}t}\big)_l^j w
\in L^2_{\loc}(\R_{\geq0};E)$
and
$\big(\tfrac{{\rm d}}{{\rm d}t}\big)_l^j$
is a homeomorphism from
$H^j_{0,\loc}(\R_{\geq0};E)$ onto the space
$L^2_{\loc}(\R_{\geq0};E)$, it follows that
\begin{equation}
\alpha_n\ast w\longrightarrow w
\quad\text{in }H^j_{0,\loc}(\R_{\geq0};E).
\label{eq:mollconv}
\end{equation}
In particular,
\[
\mu_n\longrightarrow\mu
\;\text{in }H^{-k-2}_{0,\loc}(\R_{\geq0};U),
\quad
\eta_n\longrightarrow\eta
\;\text{in }H^{-k}_{0,\loc}(\R_{\geq0};Y),
\quad
\alpha_n\ast z\longrightarrow z
\;\text{in }H^{-2}_{0,\loc}(\R_{\geq0}).
\]
By continuity of convolution,
\[
u\ast\mu_n\longrightarrow u\ast\mu
\;\text{in }H^{-k-2}_{0,\loc}(\R_{\geq0})
\quad\text{and}
\quad
y\ast\eta_n\longrightarrow y\ast\eta
\;\text{in }H^{-k}_{0,\loc}(\R_{\geq0}).
\]
Using the continuous embeddings
\[
H^{-k}_{0,\loc}(\R_{\geq0})
\subset
H^{-k-2}_{0,\loc}(\R_{\geq0}),
\qquad
H^{-2}_{0,\loc}(\R_{\geq0})
\subset
H^{-k-2}_{0,\loc}(\R_{\geq0}),
\]
we conclude that
\[
u\ast\mu_n-y\ast\eta_n-\alpha_n\ast z
\longrightarrow
u\ast\mu-y\ast\eta-z\;\text{in }H^{-k-2}_{0,\loc}(\R_{\geq0}).
\]
Let $\psi$ be a test function with compact support in $(T,\infty)$.
For all sufficiently large $n$, its support is disjoint from
$[0,T+1/n]$. Hence, by \eqref{eq:conv_n},
\[
\left\langle
u\ast\mu_n-y\ast\eta_n-\alpha_n\ast z,\psi
\right\rangle
=0.
\]
Passing to the limit gives
\[
\left\langle
u\ast\mu-y\ast\eta-z,\psi
\right\rangle
=0.
\]
Therefore,
\[
\supp\big(u\ast\mu-y\ast\eta-z\big)\subset[0,T],
\]
which proves the assertion for classical trajectories.

Now let $(x,u,y)$ be a generalised trajectory of
\eqref{eq:ODEnode}. By Definition~\ref{def:traj}, there exists a
sequence of classical trajectories $(x_j,u_j,y_j)$ such that
\[
x_j\to x
\quad\text{in }C(\R_{\geq0};X),
\qquad
u_j\to u
\quad\text{in }L^2_{\loc}(\R_{\geq0};U),
\qquad
y_j\to y
\quad\text{in }L^2_{\loc}(\R_{\geq0};Y).
\]
Set $z_j\coloneqq K\&L\spvek{x_j}{u_j}$.
We claim that
$z_j\to z\coloneqq K\&L\spvek{x}{u}$ in $H^{-2}_{0,\loc}(\R_{\geq0})$.
Indeed, let
\[
g_j
\coloneqq 
\int_0^\cdot(\cdot-\tau)
\spvek{x_j(\tau)-x(\tau)}{u_j(\tau)-u(\tau)}
\,{\rm d}\tau.
\]
Then $g_j$ converges to zero locally in $X\times U$. Moreover, the
distributional state equation gives
\[
A\&B\,g_j(t)
=
\int_0^t\big(x_j(\tau)-x(\tau)\big)\,{\rm d}\tau
-
t\big(x_j(0)-x(0)\big).
\]
Consequently, $g_j\to0$  in $L^2_{\loc}(\R_{\geq0};\dom(A\&B))$.
Since $K\&L\in L(\dom(A\&B),\R)$, it follows that
\[
z_j-z
=
\big(\tfrac{{\rm d}}{{\rm d}t}\big)_l^2(K\&L\,g_j)
\longrightarrow0
\;\text{in }H^{-2}_{0,\loc}(\R_{\geq0}).
\]
By continuity of convolution,
\[
u_j\ast\mu-y_j\ast\eta-z_j
\longrightarrow
u\ast\mu-y\ast\eta-z\;\text{in }H^{-k-2}_{0,\loc}(\R_{\geq0}).
\]
The result already proved for
classical trajectories gives
\[
\supp\big(u_j\ast\mu-y_j\ast\eta-z_j\big)\subset[0,T]
\qquad
\forall\,j\in\N.
\]
Testing against functions with compact support in $(T,\infty)$ and
passing to the limit therefore yields
\[
\supp\big(u\ast\mu-y\ast\eta-z\big)\subset[0,T].
\]
This proves \eqref{eq:maineq} for generalised trajectories.
\end{proof}

\section{State estimation}\label{sec:stateest}

Throughout this section, let $S$ be a system node on $(X,U,Y)$, where the Hilbert space $X$ is separable. Let $D\subset X$ be a dense subspace, and let $T>0$. We assume that every $\varphi_0\in D$ is distributionally null-controllable in time $T$ by the adjoint system \eqref{eq:ODEnodeadj}, in the following sense.

Let $\frakC^d_0:X\to H^{-1}_{0,\loc}(\R_{\geq0};U)$ denote the state-to-output map introduced in \eqref{eq:genSOmap}. For $k\in\N_0$, let $\frakS_{-k}^d$ denote the distributional input-to-state map of the adjoint system \eqref{eq:ODEnodeadj}. For $\varphi_0\in X$ and
$\eta\in H^{-k}_{0,\loc}(\R_{\geq0};Y)$, set
\[
\varphi
\coloneqq
\frakA^*(\cdot)\varphi_0+\frakS_{-k}^d\eta
\]
and let
\[
\mu
\coloneqq
\frakC^d_0\varphi_0
+
C^d\&D^d
\spvek{\frakS_{-k}^d\eta}{\eta}
\]
be the corresponding distributional output. The input $\eta$ is called
a {\em distributional null control for $\varphi_0\in X$ in time $T$}
if
\[
\supp\varphi\subset[0,T],
\qquad
\supp\eta\subset[0,T],
\qquad
\supp\mu\subset[0,T].
\]
If $\varphi$ is represented by a continuous function, as is the case
for the state of a generalised trajectory of
\eqref{eq:ODEnodeadj}, the first condition means that the initial state
$\varphi_0$ is driven to zero by time $T$. The above definition extends
this notion to the distributional setting.

Moreover, for $\varphi_0\in X$ and
$K\&L\in\dom(A\&B)^\prime$ satisfying
\[
K\&L\spvek{x}{u}
=
\langle x,\varphi_0\rangle_X
\qquad
\forall\,\spvek{x}{u}\in\dom(A\&B),
\]
the distributional null-control problem introduced above is equivalent to the generalised null-control problem \eqref{eq:extadjcontrol}, \eqref{eq:extadjsupp}.

Since $D$ is dense in the separable Hilbert space $X$, the Gram--Schmidt procedure yields an orthonormal basis
$(\varphi_{0j})_{j\in\N}$ of $X$ with
$\varphi_{0j}\in D$ for every $j\in\N$. For each $j\in\N$, let
\[
\eta_j\in H^{-k_j}_{0,\loc}(\R_{\geq0};Y),
\qquad k_j\in\N_0,
\]
be a distributional null control for $\varphi_{0j}$ in time $T$, and let
\[
\mu_j\in H^{-k_j-2}_{0,\loc}(\R_{\geq0};U)
\]
be the corresponding output.

Let $(x,u,y)$ be a generalised trajectory of \eqref{eq:ODEnode}. By Theorem~\ref{thm:main}, the distribution
\[
z_j\coloneqq u\ast\mu_j-y\ast\eta_j
\]
coincides on $(T,\infty)$ with the continuous function
\[
t\mapsto\langle x(t),\varphi_{0j}\rangle_X.
\]
Consequently, the restriction of $z_j$ to $(T,\infty)$ is represented by a continuous function. If, in addition, $\eta_j$ and $\mu_j$ belong to $L^2$, then \eqref{eq:movhor_inf} gives
\[
\forall\,t\geq T:\quad
\langle x(t),\varphi_{0j}\rangle_X
=
\int_0^T
\langle u(t-\tau),\mu_j(\tau)\rangle_U
-
\langle y(t-\tau),\eta_j(\tau)\rangle_Y
\,{\rm d}\tau.
\]

Thus, the Fourier coefficients
$\langle x(t),\varphi_{0j}\rangle_X$
of the state can be reconstructed from the input and output of
\eqref{eq:ODEnode} using precomputed null controls of the adjoint system. For $N\in\N$, define
\[
w_N
\coloneqq 
\sum_{j=1}^N
\big(u\ast\mu_j-y\ast\eta_j\big)\varphi_{0j}.
\]
On $(T,\infty)$, this distribution is represented by the continuous $X$-valued function
\[
t\mapsto
\sum_{j=1}^N
\langle x(t),\varphi_{0j}\rangle_X\varphi_{0j},
\]
which is the orthogonal projection of $x(t)$ onto
$\operatorname{span}(\varphi_{01},\ldots,\varphi_{0N})$.
If the null controls and their corresponding outputs belong to $L^2$, this reconstruction is given explicitly by
\begin{equation}
w_N(t)
=
\sum_{j=1}^N
\left(
\int_0^T
\langle u(t-\tau),\mu_j(\tau)\rangle_U
-
\langle y(t-\tau),\eta_j(\tau)\rangle_Y
\,{\rm d}\tau
\right)
\varphi_{0j}.
\label{eq:staterecons}
\end{equation}

We conclude this section with two remarks on the state-reconstruction procedure.

\begin{rem}\
\begin{enumerate}[(a)]
\item
The null control $\eta_j$ associated with $\varphi_{0j}$ is generally not unique. If null controls exist in the classical function-space setting, one possible choice is an optimal control obtained by solving
\[
\begin{aligned}
\text{minimise}
&\quad
\frac{1}{2}
\int_0^T
\left(
\|\eta_j(t)\|_Y^2+\|\mu_j(t)\|_U^2
\right)
\,{\rm d}t
\\
\text{subject to}
&\quad
\spvek{\dot{\varphi}_j(t)}{\mu_j(t)}
=
\sbvek{A^d\&B^d}{C^d\&D^d}
\spvek{\varphi_j(t)}{\eta_j(t)},
\qquad
\varphi_j(0)=\varphi_{0j},
\qquad
\varphi_j(T)=0.
\end{aligned}
\]
According to \cite[Sec.~5]{ReSc24}, the optimal control is characterised by the boundary-value problem
\begin{align*}
\spvek{\dot{\varphi}_j(t)}{\mu_j(t)}
&=
\sbvek{A^d\&B^d}{C^d\&D^d}
\spvek{\varphi_j(t)}{\eta_j(t)},
\qquad
\varphi_j(0)=\varphi_{0j},
\qquad
\varphi_j(T)=0,
\\
\spvek{\dot{x}_j(t)}{\eta_j(t)}
&=
-\sbvek{A\&B\\[-1mm]}{C\&D}
\spvek{x_j(t)}{\mu_j(t)}.
\end{align*}

\item
The existence of a dense subspace $D\subset X$ consisting of states that are distributionally null-controllable in time $T$ by the adjoint system \eqref{eq:ODEnodeadj} implies that the system \eqref{eq:ODEnode} is {\em reconstructable in time $T$}. More precisely, for every $t\geq T$, the input and output on $[0,t]$ uniquely determine the state $x(t)$. This property is also referred to as {\em final-state observability}; see \cite{KS2023}.

The corresponding property of the adjoint system may be regarded as {\em approximate null-con\-trollability in time $T$}, since every state can be approximated by states that are null-controllable in time $T$. It generalises {\em exact null-con\-trolla\-bility in time $T$}, which requires every state to be null-controllable in time $T$.

For well-posed systems, exact null-controllability of the adjoint system is equivalent to {\em exact reconstructability of \eqref{eq:ODEnode} in time $T$}; see \cite[Sec.~9.4]{Staffans2005}. Here, exact reconstructability in time $T$ means that there exists some $M>0$ such that every classical, and hence every generalised, trajectory of \eqref{eq:ODEnode} with $u\equiv0$ satisfies
\[
\|x(T)\|_X
\leq
M\|y\|_{L^2([0,T];Y)}.
\]
The above observation establishes a duality between reconstructability of \eqref{eq:ODEnode} and approximate null-controllability of the adjoint system \eqref{eq:ODEnodeadj}, without requiring the system to be well posed.
\end{enumerate}
\end{rem}

\section{State feedback}\label{sec:feedback}

We now apply Theorem~\ref{thm:main} to the implementation of state feedback. To this end, we introduce a subspace of $X$ consisting of those states that are compatible with some input in the domain of $A\&B$. More precisely, let
\begin{subequations}\label{eq:Vspace}
\begin{equation}
\mathcal{V}
=
\setdef{x\in X}{\exists\,u\in U\text{ such that }\spvek{x}{u}\in\dom(A\&B)}.
\end{equation}
Equipped with the norm
\begin{equation}\label{eq:Vspacenorm}
\|x\|_{\mathcal{V}}
=
\inf
\setdef{
\left\|\spvek{x}{u}\right\|_{\dom(A\&B)}
}{\,
u\in U\text{ such that }\spvek{x}{u}\in\dom(A\&B)
},
\end{equation}
\end{subequations}
the space $\mathcal{V}$ is a Hilbert space; see
\cite[Lem.~4.3.12]{Staffans2005}. Moreover, the embeddings
\[
\dom(A)\subset\mathcal{V}\subset X
\]
are continuous.

The feedback operators considered below need not be bounded on $X$. Instead, they are assumed to be bounded from $\mathcal{V}$ to the input space $U$. Recall from Remark~\ref{rem:nodes}\,\eqref{rem:nodesb} that $A\&B$ extends to
\[
[A_{-1}\ B]\in L(X\times U,X_{-1}),
\]
where
$A_{-1}:X_{-1}\supset\dom(A_{-1})=X\to X_{-1}$
is the extension of $A$.

We first specify what is meant by an admissible state feedback for the control system \eqref{eq:ODE}.

\begin{defn}[Admissible state feedback]
Let $X$ and $U$ be Hilbert spaces, and assume that
$A\&B:\dom(A\&B)\subset X\times U\to X$
satisfies conditions \eqref{def:sysnodea}, \eqref{def:sysnodec}, and
\eqref{def:sysnoded} of Definition~\ref{def:sysnode}.
Let $\mathcal{V}$ be defined as in \eqref{eq:Vspace}, and let
$[A_{-1}\ B]\in L(X\times U,X_{-1})$
be the extension of $A\&B$ introduced in
Remark~\ref{rem:nodes}\,\eqref{rem:nodesb}.
An operator $F\in L(\mathcal{V},U)$ is called an
{\em admissible state feedback} for
$\dot{x}(t)=A\&B\spvek{x(t)}{u(t)}$
if the operator
$A_F:\dom(A_F)\subset X\to X$ defined by
\begin{align*}
\dom(A_F)
&=
\setdef{x\in\mathcal{V}}{A_{-1}x+BFx\in X},
\\
A_Fx
&=
A_{-1}x+BFx
\end{align*}
generates a strongly continuous semigroup on $X$.
\end{defn}

For further details on state feedback for system nodes, we refer to
\cite[Sec.~7.3]{Staffans2005}.

To apply Theorem~\ref{thm:main}, we assume from now on that the input space is finite-dimensional. This assumption is natural for systems with finitely many actuators. After identifying $U$ with $\R^m$, an admissible feedback operator
$F\in L(\mathcal{V},\R^m)$
can be represented by an $m$-tuple
\[
(F_1,\ldots,F_m)\in(\mathcal{V}^\prime)^m,
\qquad
F_i=e_i^\top F,\quad i=1,\ldots,m,
\]
where $e_i\in\R^m$ denotes the $i$th canonical unit vector. Consequently,
\begin{equation}
Fx
=
\sum_{i=1}^m
\langle x,F_i\rangle_{\mathcal{V},\mathcal{V}^\prime}\,e_i,
\qquad x\in\mathcal{V}.
\label{eq:Feeddec}
\end{equation}

Let $T>0$. For each $i=1,\ldots,m$, define
$(K\&L)_i\in\dom(A\&B)^\prime$ by
\[
(K\&L)_i\spvek{x}{u}
=
\langle x,F_i\rangle_{\mathcal{V},\mathcal{V}^\prime}
\qquad
\forall\,\spvek{x}{u}\in\dom(A\&B).
\]
Suppose that, for each $i=1,\ldots,m$, there exists a generalised null control
\[
\eta_i\in H^{-k_i}_{0,\loc}(\R_{\geq0};Y),
\qquad k_i\in\N_0,
\]
in time $T$ for the extended adjoint system corresponding to
$K\&L=(K\&L)_i$, and let
\[
\mu_i\in H^{-k_i-2}_{0,\loc}(\R_{\geq0};U)
\]
denote the corresponding output.

Let $(x,u,y)$ be a generalised trajectory of \eqref{eq:ODEnode}. For
$i=1,\ldots,m$, define the distributional additional output
\[
z_i
\coloneqq 
(K\&L)_i\spvek{x}{u}
\in H^{-2}_{0,\loc}(\R_{\geq0}).
\]
By Theorem~\ref{thm:main},
\[
\supp\big(u\ast\mu_i-y\ast\eta_i-z_i\big)
\subset[0,T],
\qquad i=1,\ldots,m.
\]
If $\spvek{x}{u}$ is represented by a function with values in
$\dom(A\&B)$, then
\[
z_i(t)
=
\langle x(t),F_i\rangle_{\mathcal{V},\mathcal{V}^\prime}
\]
for almost every $t\geq0$. In particular, this identity holds for
classical trajectories.
Accordingly, the distributional extension of the feedback output is
\[
Fx
:=
\sum_{i=1}^m z_i e_i
\in H^{-2}_{0,\loc}(\R_{\geq0};U).
\]
Combining the above reconstruction identities yields
\[
Fx
=
\sum_{i=1}^m
\big(u\ast\mu_i-y\ast\eta_i\big)e_i
\qquad
\text{on }(T,\infty).
\]
Consequently, the state feedback $u=Fx$ can be realised by imposing
\[
u
=
\sum_{i=1}^m
\big(u\ast\mu_i-y\ast\eta_i\big)e_i
\qquad
\text{on }(T,\infty).
\]
For classical trajectories, this relation realises the state feedback
$u(t)=Fx(t)$ pointwise. For generalised trajectories, it is understood
in the distributional sense described above.

\section{Examples}\label{sec:ex}

\subsection{Stabilisation of a vibrating string}

Consider a spatially homogeneous, undamped vibrating string of length
$\ell>0$ that is clamped at the right endpoint and force-controlled at
the left endpoint. For simplicity, we assume that the propagation speed
is equal to one. The velocity of the string is measured at an interior point $\xi_0\in(0,\ell)$. We assume that 
\[ \frac{\xi_0}{\ell}=\frac{p}{q}, \qquad p,q\in\N,\qquad \gcd(p,q)=1, \]
where at least one of $p$ and $q$ is even.

Let $w(t,\xi)$ denote the displacement at position
$\xi\in[0,\ell]$ and time $t\geq0$, and abbreviate
$v^\prime\coloneqq \tfrac{\partial v}{\partial\xi}$. The system is governed by
\begin{equation}
\begin{aligned}
\ddot{w}(t,\xi)
&=
w^{\prime\prime}(t,\xi),
&
(t,\xi)
&\in\R_{\geq0}\times[0,\ell],
\\
u(t)
&=
-w^\prime(t,0),
\qquad
0=w(t,\ell),
&
t
&\in\R_{\geq0},
\\
y(t)
&=
\dot{w}(t,\xi_0),
&
t
&\in\R_{\geq0}.
\end{aligned}
\label{eq:waveq}
\end{equation}
The configuration is illustrated in Fig.~\ref{fig:example_wave}. The
equation is supplemented with initial conditions for the displacement
and velocity, which need not be specified here.

\begin{figure}[htb]
	\centering	
		\begin{tikzpicture}[scale=0.8]
			\pgfmathsetmacro{\fl}{0}
			\pgfmathsetmacro{\fr}{10}
			\pgfmathsetmacro{\fb}{0}
			\pgfmathsetmacro{\ft}{5}
			\pgfmathsetmacro{\fbb}{-1.5}
			\pgfmathsetmacro{\ca}{0.5}
			\pgfmathsetmacro{\cb}{1.0}
			
			\draw[-,color = black] (\fl,\fb) to (\fr,\fb);
			\draw[-{Latex[length=3mm]},color = black, thick] (\fl,\fb) to (\fl+2cm,\fb);
            \draw[color = black,thick] (\fl,\fb+0.2cm) to (\fl,\fb-0.2cm);
			
			\node[isosceles triangle,
			isosceles triangle apex angle=60,
			draw,fill=white, thick, rotate=90,
			minimum size =0.5cm, anchor=apex] (T1) at (\fr,\fb){};
			\draw[very thick] ([xshift=-0.2cm]T1.left corner) to ([xshift=0.2cm]T1.right corner);
			\draw[pattern=north east lines, thick, draw=none] ([xshift=-0.2cm]T1.left corner) rectangle ([xshift=0.2cm,yshift=-0.2cm]T1.right corner);
			
			\node[isosceles triangle,
			isosceles triangle apex angle=60,
			draw,fill=white, thick,
			minimum size =0.5cm, anchor=apex] (T2) at (\fl,\fb+0.5){};
			\draw[very thick] ([yshift=-0.2cm]T2.right corner) to ([yshift=0.2cm]T2.left corner);
			\draw[very thick] ([xshift=-0.2cm,yshift=-0.2cm]T2.right corner) to ([xshift=-0.2cm,yshift=0.2cm]T2.left corner);
			\draw[pattern=north east lines, thick, draw=none] ([xshift=-0.2cm,yshift=-0.2cm]T2.right corner) rectangle ([xshift=-0.4cm,yshift=0.2cm]T2.left corner);
			
			\draw[dashed,color=black, thick] (T2.apex) to ([xshift=1.7cm,yshift=1.24cm]T2.apex);
			\draw[color=black,thick]
			([xshift=1.0cm,yshift=1.45cm]T2.apex)
			node[anchor=south]{$u(t)=-w^\prime(t,0)$};
			\draw[color=black,thick]
			([xshift=1.0cm]T1.apex)
			node[anchor=south]{$w(t,\ell)=0$};
			\draw[-,color=black, ultra thick] (T1.apex) .. controls (7cm,-3cm) and (3cm,3cm) .. (T2.apex);
			
			\draw[dashed,color=black, thick] (\fl+4cm,\fb+0.85cm) to (\fl+4cm,\fb-0.1cm);
			\draw[color=black, dotted, fill=white, very thick] (\fl+4cm,\fb+0.52cm) circle (3pt);
			\draw[color=black, thick]
			(\fl+4cm,\fb-0.1cm) node[anchor=north]{$\xi_0$}
			(\fl+4cm,\fb+0.9cm) node[anchor=south]{$y(t)=\dot{w}(t,\xi_0)$};
			
			\draw[-{Latex[length=3mm]},color=black, thick] (\fl+7,\fb) to (\fl+7,\fb-0.75);
			\draw[color=black, thick]
			(\fl+7,\fb-0.4) node[anchor=west]{$w(t,\xi)$};
			
			\draw[color=black, fill=white, very thick] (T1.apex) circle (3pt);
			\draw[color=black, fill=white, very thick] (T2.apex) circle (3pt);
			
			\draw[color=black, thick]
			(\fl+1.7cm,\fb) node[anchor=south]{$\xi$};
		\end{tikzpicture}
\vspace*{-2cm}
        
\caption{Vibrating string with force input at $\xi=0$ and velocity output at $\xi_0\in(0,\ell)$.}
\label{fig:example_wave}
\end{figure}

Our objective is to stabilise the system exponentially. This can be
achieved by applying velocity feedback at the left endpoint, that is,
by using the control law
\[
u(t)=-k\dot{w}(t,0)
\]
with $k>0$. According to \cite[Thm.~9.1.3]{JaZw12}, this feedback law
renders the system exponentially stable. Since only the velocity at the
interior point $\xi_0$ is measured, however, the boundary velocity
$\dot{w}(t,0)$ is not directly available. We therefore apply the
framework developed above to reconstruct this quantity from the input
and output.

We first represent \eqref{eq:waveq} as a system node. As is customary
for the wave equation, we introduce the state
\[
x(t)
=
\left(
\begin{smallmatrix}
q(t)\\
p(t)
\end{smallmatrix}
\right)
\coloneqq 
\left(
\begin{smallmatrix}
w^\prime(t,\cdot)\\
\dot{w}(t,\cdot)
\end{smallmatrix}
\right)
\in L^2([0,\ell];\R^2).
\]
Here, $q(t),p(t)\in L^2([0,\ell])$ denote the spatial distributions of
the strain and momentum, respectively. In the present dimensionless
setting, the momentum coincides with the velocity. Since the clamping
condition $w(t,\ell)=0$ implies $p(t,\ell)=0$, the system can be written
in the form \eqref{eq:ODEnode} with
$S=\sbvek{A\&B\\[-1mm]}{C\&D}$, where
\begin{subequations}\label{eq:wavenode}
\begin{equation}
\begin{aligned}
\dom(S)=\dom(A\&B)
&=
\setdef{
\left(
\begin{smallmatrix}
q\\
p\\
u
\end{smallmatrix}
\right)
\in H^1([0,\ell];\R^2)\times\R
}{
p(\ell)=0
\ \wedge\
-q(0)=u
},
\\
A\&B
\left(
\begin{smallmatrix}
q\\
p\\
u
\end{smallmatrix}
\right)
&=
\left(
\begin{smallmatrix}
p^\prime\\
q^\prime
\end{smallmatrix}
\right),
\qquad
C\&D
\left(
\begin{smallmatrix}
q\\
p\\
u
\end{smallmatrix}
\right)
=
p(\xi_0).
\end{aligned}
\end{equation}
The fact that this defines a system node on
$(L^2([0,\ell];\R^2),\R,\R)$ can be established using the techniques
developed in \cite[Sec.~4.2]{PhReSc23}. The class considered there
differs only in that the output is a boundary value. Replacing this
boundary output by the interior point evaluation above does not affect
the system-node property.

To reconstruct the velocity at the left endpoint, we introduce the
additional output \eqref{eq:zout} determined by
$K\&L\in\dom(A\&B)^\prime$ with
\begin{equation}
K\&L
\left(
\begin{smallmatrix}
q\\
p\\
u
\end{smallmatrix}
\right)
=
p(0).
\end{equation}
\end{subequations}

We next determine the adjoint of the extended system node
\[
S_{\rm ext}
=
\left[
\begin{smallmatrix}
A\&B\\
C\&D\\
K\&L
\end{smallmatrix}
\right].
\]
Let
$\varphi_q,\varphi_p\in L^2([0,\ell])$ and
$\eta,\alpha\in\R$. Then
\begin{equation}\label{eq:waveadj}
\begin{aligned}
&
\left\langle
\left(
\begin{smallmatrix}
\varphi_q\\
\varphi_p\\
\eta\\
\alpha
\end{smallmatrix}
\right),
\left[
\begin{smallmatrix}
A\&B\\
C\&D\\
K\&L
\end{smallmatrix}
\right]
\left(
\begin{smallmatrix}
q\\
p\\
u
\end{smallmatrix}
\right)
\right\rangle
\\
&\qquad
=
\left\langle
\left(
\begin{smallmatrix}
\varphi_q\\
\varphi_p\\
\eta\\
\alpha
\end{smallmatrix}
\right),
\left(
\begin{smallmatrix}
p^\prime\\
q^\prime\\
p(\xi_0)\\
p(0)
\end{smallmatrix}
\right)
\right\rangle_{L^2([0,\ell];\R^2)\times\R^2}
\\
&\qquad
=
\int_0^\ell
\varphi_q(\xi)p^\prime(\xi)
+
\varphi_p(\xi)q^\prime(\xi)
\,{\rm d}\xi
+
\eta p(\xi_0)
+
\alpha p(0).
\end{aligned}
\end{equation}
Assume that
\[
\left(
\begin{smallmatrix}
\varphi_q\\
\varphi_p\\
\eta\\
\alpha
\end{smallmatrix}
\right)
\in\dom(S_{\rm ext}^*).
\]
Choosing $p,q\in C_0^\infty([0,\ell])$ with
$\xi_0\notin\supp(p)$ and using the definition of the weak derivative
gives
\begin{equation}
\varphi_p\in H^1([0,\ell]),
\qquad
\varphi_q|_{[0,\xi_0]}\in H^1([0,\xi_0]),
\qquad
\varphi_q|_{[\xi_0,\ell]}\in H^1([\xi_0,\ell]).
\label{eq:phismooth}
\end{equation}
We denote the left and right limits of a function $f$ at $\xi_0$ by
$f(\xi_0^-)$ and $f(\xi_0^+)$, respectively. Moreover,
$\varphi_q^\prime\in L^2([0,\ell])$ denotes the function that coincides
with the weak derivative of $\varphi_q|_{[0,\xi_0]}$ on
$[0,\xi_0]$ and with that of $\varphi_q|_{[\xi_0,\ell]}$ on
$[\xi_0,\ell]$.

Using $p(\ell)=0$ and $q(0)=-u$, the right-hand side of
\eqref{eq:waveadj} becomes
\begin{align*}
&
-\int_0^{\xi_0}
\varphi_q^\prime(\xi)p(\xi)\,{\rm d}\xi
+
\varphi_qp\Big|_0^{\xi_0^-}
-
\int_{\xi_0}^\ell
\varphi_q^\prime(\xi)p(\xi)\,{\rm d}\xi
+
\varphi_qp\Big|_{\xi_0^+}^{\ell}
\\
&\qquad
-
\int_0^\ell
\varphi_p^\prime(\xi)q(\xi)\,{\rm d}\xi
+
\varphi_pq\Big|_0^\ell
+
\eta p(\xi_0)
+
\alpha p(0)
\\
&=
-\int_0^\ell
\big(
\varphi_q^\prime(\xi)p(\xi)
+
\varphi_p^\prime(\xi)q(\xi)
\big)
\,{\rm d}\xi
\\
&\qquad
+
\big(
\varphi_q(\xi_0^-)
-
\varphi_q(\xi_0^+)
+
\eta
\big)p(\xi_0)
+
\big(
\alpha-\varphi_q(0)
\big)p(0)
+
\varphi_p(\ell)q(\ell)
+
\varphi_p(0)u.
\end{align*}
Consequently,
\begin{align*}
\dom(S_{\rm ext}^*)
&=
\setdef{
\left(
\begin{smallmatrix}
\varphi_q\\
\varphi_p\\
\eta\\
\alpha
\end{smallmatrix}
\right)
\in L^2([0,\ell];\R^2)\times\R^2
}{
\begin{array}{l}
\eqref{eq:phismooth}\text{ holds},\
\varphi_p(\ell)=0,\\
\alpha=\varphi_q(0),\
\eta=\varphi_q(\xi_0^+)-\varphi_q(\xi_0^-)
\end{array}
},
\\
S_{\rm ext}^*
\left(
\begin{smallmatrix}
\varphi_q\\
\varphi_p\\
\eta\\
\alpha
\end{smallmatrix}
\right)
&=
\left(
\begin{smallmatrix}
-\varphi_p^\prime\\
-\varphi_q^\prime\\
\varphi_p(0)
\end{smallmatrix}
\right).
\end{align*}

Thus, the system governed by $S_{\rm ext}^*$ is again a wave equation,
with a clamped condition at the right endpoint and force control at the
left endpoint, together with an interface condition at $\xi_0$. Define
$\varphi_w:\R_{\geq0}\to H^1([0,\ell])$ by
\begin{equation}
\varphi_w(t,\xi)
=
\int_\xi^\ell
\varphi_q(t,\zeta)\,{\rm d}\zeta.
\label{eq:wadj}
\end{equation}
The adjoint system can then be written as
\begin{equation}
\begin{aligned}
\ddot{\varphi}_w(t,\xi) &=\phantom{-}\varphi_w^{\prime\prime}(t,\xi), && (t,\xi)\in\R_{\geq0}\times \big((0,\xi_0)\cup(\xi_0,\ell)\big),
\\
\alpha(t)
&=-\varphi_w^\prime(t,0),
&& t\in\R_{\geq0},
\\
\eta(t)
&=-\varphi_w^\prime(t,\xi_0^+)
  +\varphi_w^\prime(t,\xi_0^-),
&& t\in\R_{\geq0},
\\
0
&=\phantom{-}\varphi_w(t,\ell),
&& t\in\R_{\geq0},
\\
\mu(t)
&=\phantom{-}\dot{\varphi}_w(t,0),
&& t\in\R_{\geq0}.
\end{aligned}
\label{eq:waveqadj}
\end{equation}
The input $\eta$ may therefore be interpreted as a point force acting
at $\xi_0$.

We now construct a generalised null control. Set
$h\coloneqq \frac{\ell}{q}$,
so that
$\xi_0=ph$, $\ell=qh$.
We use the right-shift operator $S_{r,h}$ introduced in
Section~\ref{sec:prelim}. Thus, $S_{r,h}^j$ represents a delay of length $jh$. In particular, $S_{r,h}^j\delta=\delta_{jh}$. For a polynomial $P(X)=p_0+p_1X+\cdots+ p_NX^N$, we use the notation \[ P(S_{r,h}) \coloneqq  \sum_{j=0}^N p_jS_{r,h}^j. \]
Since $p$ and $q$ are coprime and at least one of them is
even, the polynomials
$1+X^{2p}$, $1+X^{2q}$ are coprime. Hence, there exist polynomials
$P,Q\in\R[X]$ satisfying the Bézout identity
\begin{equation}
P(X)(1+X^{2p})+Q(X)(1+X^{2q})=1.
\label{eq:wavebezout}
\end{equation}
To determine a corresponding null control, introduce the Riemann
invariants
\[
R\coloneqq \dot{\varphi}_w+\varphi_w^\prime,
\qquad
S\coloneqq \dot{\varphi}_w-\varphi_w^\prime.
\]
On both subintervals $(0,\xi_0)$ and $(\xi_0,\ell)$, they satisfy
\[
\dot R-R^\prime=0,
\qquad
\dot S+S^\prime=0.
\]
Thus, $R$ and $S$ represent left- and right-travelling waves,
respectively.
Consider a smooth input $\alpha\in C_0^\infty(\R_{>0})$ and zero
initial conditions. At the interface, define
\[
\begin{aligned}
s_-(t)&\coloneqq S(t,\xi_0^-),
&
r_+(t)&\coloneqq R(t,\xi_0^+),
\\
r_-(t)&\coloneqq R(t,\xi_0^-),
&
s_+(t)&\coloneqq S(t,\xi_0^+).
\end{aligned}
\]
Continuity of $\dot{\varphi}_w$ and the interface condition in
\eqref{eq:waveqadj} yield
\begin{equation}
r_-=r_++\eta,
\qquad
s_+=s_-+\eta.
\label{eq:waveinterface}
\end{equation}
Propagation between the interface and the two endpoints gives
\begin{equation}
s_-
=
S_{r,h}^{2p}r_-
+
2S_{r,h}^p\alpha,
\qquad
r_+
=
-S_{r,h}^{2(q-p)}s_+.
\label{eq:wavepropagation}
\end{equation}
Indeed, the first identity describes propagation to the left endpoint
and back, together with the boundary input $\alpha$, whereas the
second identity incorporates the sign change caused by reflection at
the clamped endpoint.
Combining \eqref{eq:waveinterface} and
\eqref{eq:wavepropagation}, we obtain
\begin{equation}
\big(\Id+S_{r,h}^{2q}\big)s_+
=
2S_{r,h}^p\alpha
+
\big(\Id+S_{r,h}^{2p}\big)\eta.
\label{eq:wavecharrelation}
\end{equation}
Motivated by the Bézout identity \eqref{eq:wavebezout}, define
\begin{equation}
\eta_\alpha
\coloneqq 
-2P(S_{r,h})S_{r,h}^p\alpha.
\label{eq:waveadjinp}
\end{equation}
Then \eqref{eq:wavecharrelation} and \eqref{eq:wavebezout} give
\[
\big(\Id+S_{r,h}^{2q}\big)s_+
=
2\big(\Id+S_{r,h}^{2q}\big)
Q(S_{r,h})S_{r,h}^p\alpha.
\]
Since $\Id+S_{r,h}^{2q}$ is injective on causal distributions, it
follows that
\[
s_+
=
2Q(S_{r,h})S_{r,h}^p\alpha.
\]
The remaining travelling-wave components are therefore
\[
\begin{aligned}
r_+
&=
-2S_{r,h}^{2q-p}Q(S_{r,h})\alpha,
\\
r_-
&=
-2S_{r,h}^p
\big(
P(S_{r,h})
+
S_{r,h}^{2(q-p)}Q(S_{r,h})
\big)\alpha,
\\
s_-
&=
2S_{r,h}^p
\big(
P(S_{r,h})+Q(S_{r,h})
\big)\alpha.
\end{aligned}
\]
Consequently, all travelling-wave components are finite linear
combinations of delayed copies of $\alpha$. In particular, the
corresponding adjoint state has compact temporal support.

At the left endpoint,
\[
\alpha=\tfrac{1}{2}(S-R),
\qquad
\mu=\tfrac{1}{2}(S+R),
\]
and the incoming left-travelling wave is given by
$S_{r,h}^p r_-$. Hence,
\[
\mu_\alpha
=
\alpha+S_{r,h}^p r_-.
\]
Using \eqref{eq:wavebezout}, we obtain
\begin{equation}
\mu_\alpha
=
A(S_{r,h})\alpha,
\qquad
A(X)
\coloneqq 
P(X)(1-X^{2p})+Q(X)(1-X^{2q}).
\label{eq:waveadjout}
\end{equation}
Equivalently,
\[
A(X)
=
1-2X^{2p}P(X)-2X^{2q}Q(X).
\]

Let $(\alpha_n)$ be a one-sided mollifier sequence satisfying
\[
\supp\alpha_n\subset(0,h/n],
\qquad
\alpha_n\longrightarrow\delta
\quad\text{in }H^{-1}_{0,\loc}(\R_{\geq0}).
\]
Define $\eta_n
\coloneqq 
-2P(S_{r,h})S_{r,h}^p\alpha_n$, $\mu_n
\coloneqq 
A(S_{r,h})\alpha_n$,
and let $\varphi_n$ be the corresponding adjoint state. Since all
travelling-wave components consist of finitely many fixed delays of
$\alpha_n$, there exists some $T>0$, independent of $n$, such that
\[
\supp\varphi_n\subset[0,T],
\qquad
\supp\eta_n\subset[0,T],
\qquad
\supp\mu_n\subset[0,T].
\]
Passing to the distributional limit gives
\[
\eta
=
-2P(S_{r,h})S_{r,h}^p\delta,
\qquad
\mu
=
A(S_{r,h})\delta,
\]
together with a corresponding adjoint state $\varphi$ satisfying $\supp\varphi\subset[0,T]$.
Thus, $\eta$ is a generalised null control satisfying the assumptions
of Theorem~\ref{thm:main}, for instance with $k=1$.
Consequently, for every generalised trajectory of \eqref{eq:waveq},
understood as a trajectory of the system node \eqref{eq:wavenode}, the
distribution
$u\ast\mu-y\ast\eta$
coincides on $(T,\infty)$ with the velocity at the left endpoint.
Therefore,
\begin{equation}
\dot w(\cdot,0)
=
A(S_{r,h})u
+
2P(S_{r,h})S_{r,h}^p y
\qquad
\text{on }(T,\infty).
\label{eq:wavevelocityreconstruction}
\end{equation}
The Bézout identity \eqref{eq:wavebezout} evaluated at $X=0$ gives
\[
P(0)+Q(0)=1,
\]
and hence $A(0)=1$. We may therefore write
\[
A(X)=1+\widetilde A(X),
\qquad
\widetilde A(0)=0.
\]
The exponentially stabilising feedback law
\[
u(t)=-k\dot w(t,0),
\qquad
k>0,
\]
can thus be realised, after time $T$, in the causal form
\begin{equation}
u
=
-\frac{k}{1+k}
\left(
\widetilde A(S_{r,h})u
+
2P(S_{r,h})S_{r,h}^p y
\right).
\label{eq:wavefeedbackrealisation}
\end{equation}
Since $\widetilde A(0)=0$ and $p\geq1$, the right-hand side of
\eqref{eq:wavefeedbackrealisation} contains only delayed values of the
input and output.

For example, if $\xi_0=\ell/2$, then $p=1$ and $q=2$. One possible
choice in \eqref{eq:wavebezout} is
$P(X)=\tfrac12-\tfrac12 X^2$,
$Q(X)=\tfrac{1}{2}$.
In this case,
$\eta=-\delta_h+\delta_{3h}$, $\mu=\delta-\delta_{2h}$.
The corresponding adjoint state is supported in $[0,3h]$. Hence,
Theorem~\ref{thm:main} applies with
$T=3h=\tfrac{3\ell}{2}$,
and \eqref{eq:wavevelocityreconstruction} becomes
\[
\dot w(t,0)
=
u(t)-u(t-2h)+y(t-h)-y(t-3h),
\qquad
t>3h.
\]
Consequently, the stabilising feedback law
$u(t)=-k\dot w(t,0)$ can be implemented as
\[
u(t)
=
\frac{k}{1+k}
\left(
u(t-\ell)
-y\left(t-\tfrac{\ell}{2}\right)
+y\left(t-\tfrac{3\ell}{2}\right)
\right),
\qquad
t>\tfrac{3\ell}{2}.
\]

\subsection{State reconstruction for a reaction--diffusion equation with Dirichlet boundary control}

Consider a reaction--diffusion equation on the rectangular domain
$\Omega=(0,L_1)\times(0,L_2)\subset\R^2$. The control acts through the
Dirichlet boundary values on the right-hand part of the boundary,
\[
\Gamma=\{L_1\}\times[0,L_2]\subset\partial\Omega,
\]
while homogeneous Dirichlet boundary conditions are imposed on
$\partial\Omega\setminus\Gamma$. The output is given by the negative Neumann trace on $\Gamma$. We consider a unit diffusion coefficient and a spatially constant reaction coefficient $c\in\R$. Thus, the system is governed by
\begin{equation}\label{eq:rdeq}
\begin{aligned}
    \dot{x}(t,\xi)
    &=\phantom{-}
    \Delta x(t,\xi)+cx(t,\xi),
    &(t,\xi)&\in\R_{\geq0}\times\Omega,
    \\
    u(t,\xi)
    &=\phantom{-}
    x(t,\xi),
    &(t,\xi)&\in\R_{\geq0}\times\Gamma,
    \\
    0
    &=\phantom{-}
    x(t,\xi),
    &(t,\xi)&\in\R_{\geq0}\times
    \big(\partial\Omega\setminus\Gamma\big),
    \\
    y(t,\xi)
    &=-
    n^\top(\xi)\nabla x(t,\xi),
    &(t,\xi)&\in\R_{\geq0}\times\Gamma,
\end{aligned}
\end{equation}
where $n\in L^\infty(\partial\Omega;\R^2)$ denotes the outward unit
normal vector field on $\partial\Omega$.

We next formulate \eqref{eq:rdeq} as a system node. The state space is
$L^2(\Omega)$. Since the full Dirichlet trace belongs to the fractional
Sobolev space $H^{1/2}(\partial\Omega)$; see
\cite[Sec.~4.1]{PhReSc23}, the input space is chosen as
$H^{1/2}_0(\Gamma)$. This is the space of all elements of
$H^{1/2}(\Gamma)$ whose extension by zero to
$\partial\Omega\setminus\Gamma$ belongs to
$H^{1/2}(\partial\Omega)$; see also \cite{ReSc23a}. The output space is
\[
H^{-1/2}(\Gamma)\coloneqq H^{1/2}_0(\Gamma)^\prime.
\]
Let
$\gamma:H^1(\Omega)\to H^{1/2}(\partial\Omega)$
denote the {\em trace operator}, and define
\[
H^1_\Gamma(\Omega)
=
\setdef{x\in H^1(\Omega)}
{(\gamma x)|_{\partial\Omega\setminus\Gamma}=0}.
\]
The restricted trace operator
\[
\gamma_\Gamma:H^1_\Gamma(\Omega)\to H^{1/2}_0(\Gamma)
\]
maps a function to the restriction of its trace to $\Gamma$.
Furthermore, let $H_{\divg}(\Omega)$ denote the space of all vector
fields in $L^2(\Omega;\R^2)$ whose weak divergence belongs to
$L^2(\Omega)$. The restricted normal trace operator is denoted by
\[
\gamma_{\Gamma,n}:H_{\divg}(\Omega)\to H^{-1/2}(\Gamma);
\]
see \cite{ReSc23a}. For the basic properties of the normal trace
operator, we refer to \cite{Tart07}.

With these spaces and trace operators, the system node associated with
\eqref{eq:rdeq} is given by
\begin{equation}\label{eq:rdeqnode}
\begin{aligned}
\dom S=\dom(A\&B)
&=
\setdef{
\spvek{x}{u}\in H^1_\Gamma(\Omega)\times H^{1/2}_0(\Gamma)
}{
\nabla x\in H_{\divg}(\Omega)
\ \wedge\
u=\gamma_\Gamma x
},
\\
A\&B\spvek{x}{u}
&=
\Delta x+cx,
\qquad
C\&D\spvek{x}{u}
=-
\gamma_{\Gamma,n}(\nabla x).
\end{aligned}
\end{equation}
A direct calculation shows that $S$ is a self-dual system node, that
is, $S=S^\prime$. Consequently, if
\[
\mathcal{R}:H^{1/2}_0(\Gamma)\to H^{-1/2}(\Gamma)
\]
denotes the Riesz isomorphism, then the adjoint of $S$ satisfies
\begin{equation}
S^*
=
\sbmat{\Id}{0}{0}{\mathcal{R}^{-1}}
S
\sbmat{\Id}{0}{0}{\mathcal{R}^{-1}}.
\label{eq:rdadj}
\end{equation}

We now apply the state-reconstruction procedure developed in
Section~\ref{sec:stateest}. To this end, we determine null controls for
the adjoint system.
By \eqref{eq:rdadj}, the adjoint differs from the
original system \eqref{eq:rdeq} only through the Riesz
identifications of its input and output spaces. More precisely, the
adjoint system is described by
\begin{equation}\label{eq:rdeq_adj}
\begin{aligned}
    \dot{\varphi}(t,\xi)
    &=\phantom{-}
    \Delta\varphi(t,\xi)+c\varphi(t,\xi),
    &(t,\xi)&\in\R_{\geq0}\times\Omega,
    \\
    \widetilde{\eta}(t,\xi)
    &=\phantom{-}
    \varphi(t,\xi),
    &(t,\xi)&\in\R_{\geq0}\times\Gamma,
    \\
    0
    &=\phantom{-}
    \varphi(t,\xi),
    &(t,\xi)&\in\R_{\geq0}\times
    \big(\partial\Omega\setminus\Gamma\big),
    \\
    \widetilde{\mu}(t,\xi)
    &=
    -n^\top(\xi)\nabla\varphi(t,\xi),
    &(t,\xi)&\in\R_{\geq0}\times\Gamma.
\end{aligned}
\end{equation}
The abstract input and output of the adjoint system are related to
these traces by
\begin{equation}\label{eq:rcriesz}
\eta(\cdot,t)
=
\mathcal{R}\widetilde{\eta}(\cdot,t),
\qquad
\mu(\cdot,t)
=
\mathcal{R}^{-1}\widetilde{\mu}(\cdot,t),
\qquad
t\in\R_{\geq0}.
\end{equation}
The results of \cite{LeRo95,LinGu95} imply that, for every $T>0$ and
$\varphi_0\in L^2(\Omega)$, there exists a null control
\[
\widetilde{\eta}
\in
L^2([0,T];H^{1/2}_0(\Gamma))
\]
such that the solution of \eqref{eq:rdeq_adj} satisfying
$\varphi(0,\cdot)=\varphi_0$ fulfils
$\varphi(T,\cdot)=0$. The corresponding output satisfies
\[
\widetilde{\mu}
\in
L^2([0,T];H^{-1/2}(\Gamma)).
\]
Consequently,
\[
\eta\in L^2([0,T];H^{-1/2}(\Gamma)),
\qquad
\mu\in L^2([0,T];H^{1/2}_0(\Gamma)).
\]
Thus, in this case, the reconstruction can be formulated entirely in
terms of functions rather than distributions.

The results of Section~\ref{sec:stateest} then imply that every
generalised, and hence every classical, trajectory of
\eqref{eq:rdeq} satisfies
\begin{equation}
\forall\,t\geq T:\quad
\langle x(t),\varphi_0\rangle_{L^2}
=
\int_0^T
\langle u(t-\tau),\mu(\tau)\rangle_{H^{1/2}}
-
\langle y(t-\tau),\eta(\tau)\rangle_{H^{-1/2}}
\,{\rm d}\tau.
\label{eq:concrd}
\end{equation}

The Riesz isomorphism can be eliminated from the final reconstruction
formula. Indeed, its definition gives
\[
\forall\,v\in H^{-1/2}(\Gamma),\
w\in H^{1/2}_0(\Gamma):\quad
\langle v,\mathcal{R}w\rangle_{H^{-1/2}}
=
\langle v,w\rangle_{H^{-1/2},H^{1/2}}
=
\langle\mathcal{R}^{-1}v,w\rangle_{H^{1/2}}.
\]
Hence, \eqref{eq:concrd} is equivalent to
\[
\forall\,t\geq T:\quad
\langle x(t),\varphi_0\rangle_{L^2}
=
\int_0^T
\langle u(t-\tau),\widetilde{\mu}(\tau)\rangle_{H^{1/2},H^{-1/2}}
-
\langle y(t-\tau),\widetilde{\eta}(\tau)\rangle_{H^{-1/2},H^{1/2}}
\,{\rm d}\tau.
\]
Thus, the two terms in the integrand are duality pairings between the
Dirichlet and Neumann traces of the original and adjoint systems.

For the numerical illustration, we compute null controls for the
adjoint system \eqref{eq:rdeq_adj} and reconstruct the projection of the
state using \eqref{eq:staterecons}, with the inner products in that
formula replaced by the corresponding duality pairings. The
orthonormal family $(\varphi_{0j})_{j\in\N}$ is obtained by applying the
Gram--Schmidt procedure to a basis of bivariate polynomials of total
degree at most ten. This results in an approximation space of dimension
$N=66$. The simulation results are shown in
Fig.~\ref{fig:HeatEq}.

\newlength\figH
\newlength\figW
\setlength{\figH}{7cm}
\setlength{\figW}{0.45\textwidth}
\begin{figure}
	\centering

	\caption{Top row: spatial domain $\Omega$ with input and output
	boundaries, and the $L^2$-norm of the estimation error
	$e(t,\xi)=x(t,\xi)-\widehat{x}(t,\xi)$.
	Bottom row: simulated state and estimation error
	$e(t,\xi_1,L_2/2)$ for $t\in[0,5]\,\mathrm{s}$.}
	\label{fig:HeatEq}
\end{figure}

\section*{Conclusion}

We have developed a modulating-function framework for partial state
reconstruction from past input and output data. The approach is based
on generalised null controls of the adjoint system and corresponding
convolutions with the input and output.

The first main contribution is a unified formulation for
infinite-dimensional linear systems in the system-node framework.
This framework requires comparatively few assumptions, is closed under
adjunction, and does not require the system to be well-posed. The
second contribution is a distributional extension of the
modulating-function method. The resulting modulating distributions
allow the reconstruction of additional outputs defined by unbounded
operators and are therefore particularly suitable for the
implementation of unbounded state feedback.

The theoretical results were illustrated by two examples. For a
vibrating string, the distributional approach was used to reconstruct a
boundary velocity from input and output data and thereby realise an
exponentially stabilising feedback law. For a reaction--diffusion
equation with Dirichlet boundary control and Neumann boundary
observation, the method was applied to finite-dimensional state
reconstruction and illustrated numerically.

\end{document}